\renewcommand{\P}{\mathbb{P}}
\newcommand{\E}{\mathbb{E}}
\newcommand{\R}{\mathbb{R}}
\def\N{\mathbb{N}}
\def\e{{\rm e}}
\def\d{{\rm d}}
\def\z{\mathbf{Z}}
\def \1{1 \mkern -6mu 1} 
\newtheorem{theorem}{Theorem}[section]
\newtheorem{definition}[theorem]{Definition}
\newtheorem{proposition}[theorem]{Proposition}
\newtheorem{lemma}[theorem]{Lemma}
\newtheorem{corollary}[theorem]{Corollary}
\theoremstyle{definition}
\newtheorem{remark}[theorem]{Remark}  
\title{\vspace{-2cm}\textsc{On a two-parameter Yule-Simon distribution} \vspace{0.5cm}}
\author{Erich Baur \thanks{Bern University of Applied Sciences \hfill
    \texttt{erich.baur@bfh.ch}} \qquad Jean Bertoin \thanks{Institut f\"ur
    Mathematik, Universit\"at Z\"urich.\hfill
    \texttt{jean.bertoin@math.uzh.ch}} }
\date{}
\DeclareSymbolFont{extraup}{U}{zavm}{v}{n}
\DeclareMathSymbol{\varheart}{\mathalpha}{extraup}{86}
\DeclareMathSymbol{\vardiamond}{\mathalpha}{extraup}{87}
\renewcommand*{\@fnsymbol}[1]{\ensuremath{\ifcase#1\or \spadesuit \or \varheart \or \vardiamond \or \clubsuit\or \or
   \mathsection\or \mathparagraph\or \|\or **\or \dagger\dagger
   \or \ddagger\ddagger \else\@ctrerr\fi}}
\begin{document}
\normalem
\maketitle

\begin{abstract}  
  We extend the classical one-parameter Yule-Simon law to a
  version depending on two parameters, which in part appeared in \cite{Be}
  in the context of a preferential attachment algorithm with fading
  memory. By making the link to a general branching process with
  age-dependent reproduction rate, we study the tail-asymptotic behavior of
  the two-parameter Yule-Simon law, as it was already initiated
  in \cite{Be}. Finally, by superposing mutations to the branching process,
  we propose a model which leads to the full two-parameter range of the
  Yule-Simon law, generalizing thereby the work of Simon \cite{Sim} on
  limiting word frequencies.
 \end{abstract}

\medskip

\noindent \emph{\textbf{Keywords:} Yule-Simon model, Crump-Mode-Jagers
  branching process, population model with neutral mutations, heavy tail
  distribution, preferential attachment with fading memory.}

\medskip

\noindent \emph{\textbf{AMS subject classifications:} 60J80; 60J85; 60G55; 05C85.}

\section{Introduction}

The standard Yule process $Y=(Y(t))_{t\geq 0}$ is a basic population model
in continuous time and with values in $\N \coloneqq \{1, 2, \ldots\}$. It
describes the evolution of the size of a population started from a single
ancestor, where individuals are immortal and give birth to children at unit
rate, independently one from the other. It is well-known that for every
$t\geq 0$, $Y(t)$ has the geometric distribution with parameter
$\e^{-t}$. As a consequence, if $T_{\rho}$ denotes an exponentially
distributed random time with parameter $\rho >0$ which is independent of
the Yule process, then for every $k\in\N$, there is the identity
\begin{equation}\label{E:YSd}
  \P(Y(T_{\rho})=k)=\rho \int_0^{\infty} \e^{-\rho t} (1-\e^{-t})^{k-1} \e^{-t} \d t = \rho B(k,\rho+1) , 
\end{equation}
 where $B$ is the beta function.
 
 The discrete distribution in \eqref{E:YSd} has been introduced by
 H.A. Simon \cite{Sim} in 1955 and is nowadays referred to as the
 Yule-Simon distribution with parameter $\rho$. It arises naturally in
 preferential attachment models and often explains the occurrence of heavy
 tail variables in stochastic modeling. Indeed, the basic estimate
$$B(k,\rho+1) \sim  \Gamma(\rho+1) k^{-(\rho +1)} \qquad \text{as }k\to \infty,$$
implies that the Yule-Simon distribution has a fat tail with exponent
$\rho$.

The present work is devoted to a two-parameter generalization of the
Yule-Simon distribution, which results from letting the fertility (i.e. the
reproduction rate) of individuals in the population model depend on their
age. Specifically, imagine that now the rate at which an individual of age
$a\geq 0$ begets children is $\e^{-\theta a}$ for some fixed $\theta\in\R$. So
for $\theta >0$ the fertility decays with constant rate $\theta$ as
individuals get older, whereas for $\theta<0$, the fertility increases with
constant rate $-\theta$. Denote the size of the population at time $t$ by
$Y_{\theta}(t)$. In other words, $Y_{\theta}=(Y_{\theta}(t))_{t\geq 0}$ is
a general (or Crump-Mode-Jagers) branching process, such that the point
process on $[0,\infty)$ that describes the ages at which a typical
individual begets a child is Poisson with intensity measure
$\e^{-\theta t}\d t$. For $\theta=0$, $Y_0=Y$ is the usual Yule process.

\begin{definition} \label{D1} Let $\theta \in\R$ and $\rho >0$. Consider
  $Y_{\theta}$ as above and let $T_{\rho}$ be an exponential random time
  with parameter $\rho >0$, independent of $Y_{\theta}$.  We call the law
  of the discrete random variable
$$X_{\theta, \rho}\coloneqq Y_{\theta}(T_{\rho})$$ 
the {\rm Yule-Simon distribution with parameters $(\theta, \rho)$}.
\end{definition}

A key difference with the original Yule-Simon distribution, which
corresponds to $\theta =0$, is that no close expression for the
two-parameter distribution is known\footnote{ Although the probability
  $\P(X_{\theta, \rho}=1)$ can easily be computed in terms of an incomplete
  Gamma function, the calculations needed to determine
  $\P(X_{\theta, \rho}=k)$ for $k\geq 2$ become soon
  intractable.}. Actually, the general branching process $Y_{\theta}$ is
not even Markovian for $\theta \neq 0$, and its one-dimensional
distributions are not explicit.  This generalization of the Yule-Simon
distribution has recently appeared in \cite{Be} for $\theta >0$ and
$\rho >1$, in connection with a preferential attachment model with fading
memory in the vein of Simon's original model. We shall point out in
Section \ref{sec:PopModel} that the range of parameters $\theta \leq 0$ and
$\rho >0$ arises similarly for a family of related models.

One of the purposes of the present contribution is to describe some
features of the two-parameter Yule-Simon law, notably by completing
\cite{Be} and determining the tail-asymptotic behavior of
$X_{\theta, \rho}$. It was observed in \cite{Be} that the parameter
$\theta =1$ is critical, in the sense that when $\theta <1$,
$X_{\theta, \rho}$ has a fat tail with exponent $\rho/(1-\theta )$, whereas
when $\theta >1$, some exponential moments of positive order of
$X_{\theta, \rho}$ are finite. We show here in Section \ref{sec:Tails} that
when $\theta >1$, the tail of $X_{\theta, \rho}$ is actually decaying
exponentially fast with exponent $\ln \theta -1+1/\theta$.  Further, in the
critical case $\theta=1$, we show that $X_{1,\rho}$ has a stretched
exponential tail with stretching exponent $1/3$.

By superposing independent neutral mutations at each birth with fixed
probability $1-p\in(0,1)$ to the classical Yule process, the original
Yule-Simon law with parameter $\rho=1/p$ captures the limit number of
species of a genetic type chosen uniformly at random among all types, upon
letting time tend to infinity. This fact is essentially a rephrasing of
Simon's results in \cite{Sim}. We give some (historical) background in
Section \ref{sec:PopModel} and extend Simon's observations to more general
branching processes, for which the two-parameter distribution from
Definition \ref{D1} is observed.

In a similar vein, the number of species belonging to a genus chosen
uniformly at random has been studied for generalized Yule models in several
works by Lansky, Polito, Sacerdote and further co-authors, both at fixed
times $t$ and upon letting $t\rightarrow\infty$. For instance,
in \cite{LaPoSa1}, the linear birth process governing the growth of species
is replaced by a birth-and-death process, whereas in \cite{LaPoSa2}, a
fractional nonlinear birth process is considered instead. Both works are
formulated in the framework of World Wide Web modeling. Recently,
Polito \cite{Po} changed also the dynamics how different genera appear,
leading to a considerably different limit behavior.

The rest of this article is organized as follows. In the following
Section \ref{sec:Ytheta}, we analyze the branching process $Y_\theta$
introduced above and study its large-time behavior. In
Section \ref{sec:PoissonRep}, we develop an integral representation for the
tail distribution of the two-parameter Yule-Simon law, which lies at the
heart of our study of the tail asymptotics of $X_{\theta,\rho}$ in the
subsequent Section \ref{sec:Tails}. This part complements the
work \cite{Be} and contains our main results. In the last
Section \ref{sec:PopModel}, we relate the generalized Yule-Simon
distribution to a population model with mutations, in the spirit of Simon's
original work \cite{Sim}.

\section{Preliminaries on the general branching process $Y_{\theta}$}
\label{sec:Ytheta} The purpose of this section is to gather some basic
features about the general branching process $Y_{\theta}$ that has been
described in the introduction. We start with a construction of $Y_{\theta}$
in terms of a certain branching random walk.

Specifically, we consider a sequence $\z=(\z_n)_{n\geq 0}$ of point
processes on $[0,\infty)$ which is constructed recursively as
follows. First, $\z_0=\delta_0$ is the Dirac point mass at $0$, and for any
$n\geq 0$, $\z_{n+1}$ is obtained from $\z_n$ by replacing each and every
atom of $\z_n$, say located at $z\geq 0$, by a random cloud of atoms
$\{z+\omega^z_i\}_{i=1}^{{\mathrm N}^z}$, where
$\{\omega^z_i\}_{i=1}^{{\mathrm N}^z}$ is the family of atoms of a Poisson
point measure on $[0,\infty)$ with intensity $\e^{-\theta t} \d t$ and to
different atoms $z$ correspond independent such Poisson point measures. In
particular, each ${\mathrm N}^z$ has the Poisson distribution with
parameter $1/\theta$ when $\theta >0$, whereas ${\mathrm N}^z=\infty$
a.s. when $\theta \leq 0$.  If we now interpret $[0,\infty)$ as a set of
times, the locations of atoms as birth-times of individuals, and consider
the number of individuals born on the time-interval $[0,t]$,
$$Y_{\theta}(t)\coloneqq \sum_{n=0}^{\infty} \z_n([0,t])\,,\qquad t\geq 0,$$
then $Y_{\theta} =(Y_{\theta}(t))_{t\geq 0}$ is a version of the general
branching process generalizing the standard Yule process that was discussed
in the introduction.

We readily observe the following formula for the first
moments:

\begin{proposition}\label{P01} 
One has for every $t\geq 0$:
$$\E(Y_{\theta}(t))=  \left\{
\begin{matrix} (\e^{(1-\theta )t}- \theta)/ (1-\theta) & \text{ if }\theta\neq 1,\\
1+t & \text{ if } \theta =1.
\end{matrix} \right. $$
\end{proposition}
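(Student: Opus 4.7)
Let $\mu(t):=\E(Y_\theta(t))$. The strategy is to derive a renewal-type integral equation for $\mu$ from the branching construction, turn it into a first-order linear ODE, and solve. Since $Y_\theta(t)=\sum_{n\ge 0}\z_n([0,t])$ and $\z_{n+1}$ is obtained from $\z_n$ by translating to each atom an independent Poisson point process with intensity $\eta(ds)=\mathrm{e}^{-\theta s}\mathbf{1}_{s\ge 0}\,ds$, Fubini/Campbell gives $\E(\z_n(\cdot))=\eta^{*n}$, where $\eta^{*0}=\delta_0$. Summing over $n\ge 0$ (and using monotone convergence, so the identity is valid regardless of sign of $\theta$, possibly with both sides infinite a priori),
\begin{equation*}
  \mu(t)=1+\int_0^t \mu(t-s)\,\mathrm{e}^{-\theta s}\,ds,\qquad t\ge 0.
\end{equation*}
Alternatively, the same equation follows from the branching property by conditioning on the atoms of the first-generation Poisson process around the ancestor.

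Next, set $M(t):=\int_0^t \mu(u)\mathrm{e}^{\theta u}\,du$. Multiplying the renewal equation by $\mathrm{e}^{\theta t}$ gives $\mu(t)\mathrm{e}^{\theta t}=\mathrm{e}^{\theta t}+M(t)$, and differentiating yields the linear ODE
\begin{equation*}
  M'(t)-M(t)=\mathrm{e}^{\theta t},\qquad M(0)=0.
\end{equation*}

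Finally, I would solve this ODE in the two cases. For $\theta\neq 1$, a particular solution is $\mathrm{e}^{\theta t}/(\theta-1)$; adjusting by a multiple of $\mathrm{e}^t$ to enforce $M(0)=0$ gives $M(t)=(\mathrm{e}^t-\mathrm{e}^{\theta t})/(1-\theta)$, whence
\begin{equation*}
  \mu(t)=1+\mathrm{e}^{-\theta t}M(t)=\frac{\mathrm{e}^{(1-\theta)t}-\theta}{1-\theta}.
\end{equation*}
For $\theta=1$ there is resonance, so one tries $M(t)=t\,\mathrm{e}^t$, which solves the ODE with the correct initial condition, and then $\mu(t)=1+\mathrm{e}^{-t}\cdot t\,\mathrm{e}^t=1+t$.

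The only genuinely delicate point is showing that $\mu(t)<\infty$ so that the renewal equation is well-posed (rather than a trivial identity $\infty=\infty$); for $\theta>0$ this is immediate since the offspring of one individual has Poisson($1/\theta$) total mass, and for $\theta\le 0$ one can truncate at time $t$ and use that the Laplace transform of $\eta\mathbf{1}_{[0,t]}$ is finite to show by induction that $\eta^{*n}([0,t])$ grows at most like $t^n\mathrm{e}^{|\theta|t}/n!$, which is summable in $n$. Everything else is routine calculus.
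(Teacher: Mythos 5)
Your proof is correct. Both you and the paper rest on the same key observation — that $\E(\z_n(\cdot))$ is the $n$-fold convolution power $\eta^{*n}$ of the intensity $\eta(\d s)=\e^{-\theta s}\1_{s\geq 0}\,\d s$ — but you extract the formula by a genuinely different route. The paper passes immediately to Laplace transforms in $t$, turning the sum of convolutions into the geometric series $\sum_n(\theta+q)^{-n}=(\theta+q)/(\theta+q-1)$, and then inverts. You instead sum the convolutions in the time domain to obtain the renewal equation $\mu(t)=1+\int_0^t\mu(t-s)\e^{-\theta s}\,\d s$, convert it to the linear ODE $M'-M=\e^{\theta t}$ via $M(t)=\int_0^t\mu(u)\e^{\theta u}\,\d u$, and solve by elementary means, the $\theta=1$ case emerging as a resonance. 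The computations check out in both cases. The Laplace-transform route is slightly shorter and makes the dichotomy $\theta\ne 1$ versus $\theta=1$ visible directly from the pole structure of $(\theta+q)/(\theta+q-1)$, while your route stays entirely real-variable and avoids any appeal to transform inversion, which some readers may find more transparent. A small additional merit of your write-up is that you explicitly address a priori finiteness of $\mu(t)$ (needed for the renewal equation to be a genuine identity rather than $\infty=\infty$), which the paper leaves implicit and only notes a posteriori; your bound $\eta^{*n}([0,t])\leq \e^{|\theta|t}t^n/n!$ for $\theta\leq 0$ is correct, since $\eta^{*n}(\d u)=\e^{-\theta u}u^{n-1}/(n-1)!\,\d u$.
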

\begin{proof} By definition, the intensity of the point process $\z_1$ is
  $\e^{-\theta t}\d t$, and by the branching property, the intensity of
  $\z_n$ is the $n$-th convolution product of the latter. Considering
  Laplace transforms, we see that for any $q>1-\theta$:
\begin{align*}
  q \int_0^{\infty}\E\left( Y_{\theta}(t)\right) \e^{-qt} \d t &= q \int_0^{\infty} \e^{-qt} \left(\sum_{n=0}^{\infty} \E\left(\z_n([0,t])\right)\right) \d t \\
  &= \sum_{n=0}^{\infty}  \E\left(  \int_0^{\infty} \e^{-qt} \z_n(\d t) \right) \\
  &=  \sum_{n=0}^{\infty} (\theta+q)^{-n}\\
  &= \frac{\theta +q}{\theta+q-1}.
\end{align*}
Inverting this Laplace transform yields our claim. 
\end{proof}
\begin{remark}\label{R00} The calculation above shows that 
  a two-parameter Yule-Simon variable $X_{\theta, \rho}$, as in Definition
  \ref{D1}, is integrable if and only if $\theta + \rho >1$, and in that
  case we have
$$\E(X_{\theta, \rho})=\frac{\theta +\rho}{\theta +\rho-1}.$$
\end{remark} 

Proposition \ref{P01} ensures the finiteness of the branching process
$Y_{\theta}$ observed at any time. Further, it should be plain that the
atoms of the branching random walk $\z$ (at all generations) occupy
different locations. Thus $Y_{\theta}$ is a counting process, in the sense
that its sample paths take values in $\N$, are non-decreasing and all its
jumps have unit size.  We next discuss its large time asymptotic behavior,
and in this direction, we write
$$Y_{\theta}(\infty)= \lim_{t\to \infty}\uparrow Y_{\theta}(t)\in \bar
\N\coloneqq \N \cup\{\infty\}$$ for its terminal value.

\begin{proposition}\label{P02} \begin{itemize}
  \item[(i)] If $\theta >0$, then $Y_{\theta}(\infty)$ has the Borel
    distribution with parameter $1/\theta$, viz.
$$ \P(Y_{\theta}(\infty)=n)= \frac{\e^{-n/\theta}(n/\theta)^{n-1}}{n!} \qquad \text{for every }n\in \N.$$
In particular $\P(Y_{\theta}(\infty)<\infty)=1$ if and only if $\theta\geq 1$. 
\item[(ii)] If $\theta <1$, then 
$$\lim_{t\to \infty} \e^{(\theta-1)t} Y_{\theta}(t) = W_{\theta} \qquad \text{in probability,}$$
where $W_{\theta}\geq 0$ is a random variable in $L^k(\P)$ for any
$k\geq 1$.  Moreover, the events $\{W_{\theta}=0\}$ and
$\{Y_{\theta}(\infty)<\infty\}$ coincide a.s., and are both negligible
(i.e. have probability $0$) if $\theta\leq 0$.
\end{itemize} 
\end{proposition}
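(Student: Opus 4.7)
For part (i), I would exploit that when $\theta>0$ the Poisson intensity $\e^{-\theta t}\d t$ has finite total mass $1/\theta$: each individual then produces a $\mathrm{Poisson}(1/\theta)$-distributed total number of children, and $Y_\theta(\infty)$ is precisely the total progeny of a Galton-Watson tree with $\mathrm{Poisson}(1/\theta)$ offspring law. Writing $F(s)=\E[s^{Y_\theta(\infty)}]$, the branching recursion gives the fixed-point equation $F(s)=s\exp((F(s)-1)/\theta)$, from which Lagrange inversion extracts the Borel probabilities $\e^{-n/\theta}(n/\theta)^{n-1}/n!$ as coefficients. The a.s.\ finiteness criterion is then the standard Galton-Watson dichotomy: the extinction probability equals $1$ exactly when the mean offspring $1/\theta$ is at most $1$, i.e.\ when $\theta\geq 1$.

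For part (ii), my plan is to identify the Malthusian parameter of the CMJ process and to exhibit a natural martingale on the branching random walk $\z$. Solving $\int_0^\infty \e^{-\alpha t}\e^{-\theta t}\d t=1$ yields $\alpha=1-\theta>0$ under $\theta<1$, and I introduce
$$M_n\coloneqq \int_0^\infty \e^{-\alpha t}\,\z_n(\d t),\qquad n\geq 0,$$
with $M_0=1$. Conditioning on $\mathcal{F}_n\coloneqq \sigma(\z_0,\ldots,\z_n)$ and applying Campbell's formula to each independent Poisson cloud of children gives $\E[M_{n+1}\mid\mathcal{F}_n]=M_n$, so $(M_n)$ is a non-negative martingale converging almost surely to some $M_\infty\geq 0$. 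To upgrade this to $L^k$-convergence for every $k\geq 1$, I would bound $\E[M_n^k]$ uniformly in $n$ by induction, using the branching decomposition of $M_{n+1}$ over the atoms of $\z_n$ together with the fact that every moment $\E\bigl[(\int\e^{-\alpha s}\z_1(\d s))^j\bigr]$ is finite, since $\int_0^\infty \e^{-j\alpha t-\theta t}\d t<\infty$ for each $j\geq 1$ as soon as $\theta<1$.

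The passage from the generation-indexed limit $M_\infty$ to the continuous-time rescaling $\e^{(\theta-1)t}Y_\theta(t)$ then follows from classical CMJ theory (Nerman's counting theorem): the supercritical CMJ process with Malthusian parameter $\alpha=1-\theta$ satisfies $\e^{-\alpha t}Y_\theta(t)\to c\,M_\infty$ in probability, with the normalizing constant $c=1/(1-\theta)$ forced by the asymptotics of $\E[Y_\theta(t)]$ from Proposition~\ref{P01}. I set $W_\theta\coloneqq c\,M_\infty$. The identification $\{W_\theta=0\}=\{Y_\theta(\infty)<\infty\}$ a.s.\ is then by double inclusion: on $\{Y_\theta(\infty)<\infty\}$ the point processes $\z_n$ are eventually empty so $M_\infty=0$; conversely, a Kesten-Stigum-type argument (made accessible by the $L^k$-boundedness above) gives $M_\infty>0$ a.s.\ on non-extinction. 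For $\theta\leq 0$ the intensity $\e^{-\theta t}\d t$ has infinite mass, so each individual a.s.\ produces infinitely many children and extinction is impossible.

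The main obstacle I expect is the uniform $L^k$-control of $M_n$: compounding Poisson moments across generations requires a careful recursive bound, and once this is in hand everything else falls into place. The transfer from generations to continuous time is technically non-trivial but a standard ingredient of CMJ theory, and the Kesten-Stigum identification of the zero set is classical under the moment control that our $L^k$-bounds supply.
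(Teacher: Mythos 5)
Your argument is correct and follows essentially the same route as the paper. For (i), you and the paper both identify $(\z_n([0,\infty)))_{n\geq 0}$ as a Galton--Watson process with Poisson$(1/\theta)$ offspring, read off $Y_\theta(\infty)$ as the total progeny (Borel distribution), and use the standard sub/super-criticality dichotomy; the fixed-point/Lagrange-inversion derivation you sketch is a welcome self-contained justification of a fact the paper simply quotes as well known. For (ii), the paper computes the Malthus exponent $\alpha=1-\theta$ from $\int_0^\infty \e^{-\alpha t}\e^{-\theta t}\,\d t=1$ and then cites Doney, Bingham--Doney, and Nerman for the convergence, the $L^k$-moments, and the identification of the zero set of the limit; you arrive at the same conclusions by unpacking the machinery those references rest on, namely the generation-indexed additive (Biggins) martingale $M_n=\int\e^{-\alpha t}\z_n(\d t)$, its uniform $L^k$-boundedness via Campbell's formula and recursive moment estimates, Nerman's counting theorem for the transfer to continuous time, and a Kesten--Stigum-type zero--one argument for $\{W_\theta=0\}=\{\text{extinction}\}$. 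So this is the same approach at a higher level of detail rather than a genuinely different one. Two small points worth keeping in mind if you write it out in full: the matching $c=1/(1-\theta)$ via $\E[Y_\theta(t)]$ presupposes $\E[M_\infty]=1$, which your $L^2$-bound does supply but should be invoked explicitly; and for $\theta\le 0$ the offspring number per individual is a.s.\ infinite, so the extinction/PGF step should be phrased so it degenerates gracefully to $q=0$ (as you indeed note at the end).
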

\begin{proof} (i) When $\theta > 0$, $(\z_n([0,\infty))_{n\geq 0}$ is a
  Galton-Watson process with reproduction law given by the Poisson
  distribution with parameter $1/\theta$.  In particular, it is critical
  for $\theta=1$, sub-critical for $\theta>1$, and super-critical for
  $\theta<1$.  In this setting, $Y_{\theta}(\infty)$ is the total
  population generated by a single ancestor in this Galton-Watson process;
  since the reproduction law is Poisson, it is well-known that
  $Y_{\theta}(\infty)$ is distributed according to the Borel distribution
  with parameter $1/\theta$.

  (ii) The claims follow by specializing to our setting well-known results
  on general branching processes. More precisely, the fact that
  $\int_0^{\infty} \e^{-(1-\theta)t} \e^{-\theta t} \d t=1$ shows that the
  so-called Malthus exponent of the general branching process $Y_{\theta}$
  equals $1-\theta$. Then we just combine Theorem A of Doney
  \cite{Doney72}, Theorem 1 of Bingham and Doney \cite{BingDon}, and
  Theorem 3.1 in Nerman \cite{Ner}.
\end{proof}

Finally, it will be convenient to also introduce 
$$F_{\theta}(t)\coloneqq \sum_{n=0}^{\infty} \int_0^t \e^{-\theta(t-s)} \z_n(\d s), \qquad t\geq 0.$$
We call $F_{\theta}=(F_{\theta}(t))_{t\geq 0}$ the fertility process; it
can be interpreted as follows. Recall that an atom, say at $s\geq 0$, of
the branching random walk (at any generation $n$) is viewed as the
birth-time of an individual, and $t-s$ is thus its age at time $t\geq
s$. The times at which this individual begets children form a Poisson point
measure on $[s,\infty)$ with intensity $\e^{-\theta (t-s)} \d t$. Hence,
$F_{\theta}(t)$ should be viewed as the total rate of birth (therefore the
name fertility) at time $t$ for the population model described by
$Y_{\theta}$.

\begin{proposition}\label{P03} The fertility process $F_{\theta}$ is a
  Markov process on $(0,\infty)$ with infinitesimal generator
\begin{equation}\label{E:genF}
{\mathcal G}_{\theta}f(x)=-\theta  x f'(x)+x(f(x+1)-f(x)), 
\end{equation}
say for $f:(0,\infty)\to \R$ a bounded ${\mathcal C}^1$ function with
bounded derivative $f'$.
\end{proposition}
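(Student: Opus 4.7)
The plan is to read off the pathwise dynamics of $F_\theta$ directly from its construction in terms of $\z$, to verify that these dynamics depend on the past only through the present value of $F_\theta$, and then to extract the generator via a standard first-jump decomposition.

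Let $0<\tau_1<\tau_2<\cdots$ denote the ordered atoms of $\sum_n \z_n$, that is, the birth times in the population. On any interval $[\tau_k,\tau_{k+1})$, no new term enters the defining sum of $F_\theta$ and each existing term $\e^{-\theta(t-s_i)}$ decays deterministically, so
$$F_\theta(t) = F_\theta(\tau_k)\,\e^{-\theta(t-\tau_k)}, \qquad t \in [\tau_k,\tau_{k+1}).$$
At the birth time $\tau_{k+1}$ a new atom at $s=\tau_{k+1}$ enters the sum and contributes $1$, hence $F_\theta(\tau_{k+1}) = F_\theta(\tau_{k+1}^-) + 1$. Moreover, by the very definition of $\z$, each individual born at time $s_i$ begets offspring at instantaneous rate $\e^{-\theta(t-s_i)}$ at time $t$; summing over the individuals alive at time $t$ shows that the total birth hazard is exactly $F_\theta(t)$. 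Combined with the deterministic decay rule, this demonstrates that the conditional law of $(F_\theta(t+u))_{u\ge 0}$ given the natural filtration $\mathcal{F}_t$ of the branching random walk depends only on the scalar $F_\theta(t)$. Hence $F_\theta$ is Markovian, and in fact a piecewise-deterministic Markov process with decay rate $\theta x$ and jumps of size $+1$ occurring at rate $x$.

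Given this description, a first-jump decomposition starting from $F_\theta(0)=x$ on the time interval $[0,h]$ for small $h>0$ yields
$$\E^x\bigl[f(F_\theta(h))\bigr] = (1 - xh)\, f\bigl(x\e^{-\theta h}\bigr) + xh\, f(x+1) + o(h).$$
Since $f$ is bounded and $\mathcal{C}^1$ with bounded derivative, the Taylor expansion $f(x\e^{-\theta h}) = f(x) - \theta x h f'(x) + o(h)$ holds; subtracting $f(x)$, dividing by $h$ and letting $h\downarrow 0$ then yields exactly $\mathcal{G}_\theta f(x) = -\theta x f'(x) + x(f(x+1)-f(x))$, which is \eqref{E:genF}.

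The main obstacle is establishing the Markov property itself: $F_\theta(t)$ is an aggregate functional of the much richer state consisting of the ages of all alive individuals, and such a coarsening need not a priori preserve markovianity. What makes it work is a happy coincidence intrinsic to the choice of fertility $\e^{-\theta a}$: the instantaneous birth rate at time $t$ equals $F_\theta(t)$ itself (and not some other functional of the age configuration), and each birth contributes exactly $+1$ to $F_\theta$ regardless of the ages of the existing individuals. Once these two features are checked, the rest is a routine PDMP computation.
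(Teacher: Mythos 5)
Your proof is correct in substance but takes a genuinely different route from the paper. The paper establishes the semimartingale identity $F_\theta(t) = Y_\theta(t) - \theta\int_0^t F_\theta(s)\,\d s$, shows that $Y_\theta(t) - \int_0^t F_\theta(s)\,\d s$ is a martingale (by summing the Poisson compensators over generations), deduces via the change-of-variables formula that $f(F_\theta(t)) - \int_0^t \mathcal{G}_\theta f(F_\theta(s))\,\d s$ is a martingale, and then invokes well-posedness of the martingale problem (Ethier--Kurtz) to conclude. You instead identify $F_\theta$ directly as a piecewise-deterministic Markov process (deterministic exponential decay at rate $\theta$ interspersed with $+1$ jumps at rate equal to the current value) and read off the generator from a first-jump expansion. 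Your route is more constructive and elementary; the paper's route has the advantage that the Markov property falls out of an established general theorem rather than having to be argued directly.

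The one place where your argument is looser than the paper's is precisely in asserting the Markov property. You write that the PDMP description ``demonstrates'' that the conditional law of the future given $\mathcal{F}_t$ depends on the past only through $F_\theta(t)$. That is true, but it relies on the lack-of-memory structure of the Poisson point measures attached to each individual: conditionally on $\mathcal{F}_t$, the residual reproduction processes of all individuals alive at time $t$ are independent Poisson on $(t,\infty)$ with intensities $\e^{-\theta(s - s_i)}\,\d s$, whose superposition has rate $\sum_i \e^{-\theta(t+u - s_i)} = F_\theta(t)\e^{-\theta u}$ until the next birth. Spelling this out (and iterating through the jumps via the strong Markov property of Poisson processes) is the step that actually turns ``the total birth hazard is $F_\theta(t)$'' into the Markov property of the one-dimensional aggregate, and is what the paper's martingale-problem argument quietly replaces. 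Your first-jump expansion is then fine: the probability of two or more jumps in $[0,h]$ is $O(h^2)$ because the rate is bounded on $[0,h]$ by $(x+1)\e^{|\theta| h}$, so it is absorbed in $o(h)$. Flagging the Markov step explicitly, or substituting the martingale-problem route for it, would make the proof airtight.
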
 
\begin{remark} \label{R01} Specialists will have recognized from
  \eqref{E:genF} that the fertility $F_{\theta}$ is a so-called continuous
  state branching process; see \cite{Jirina} and Chapter 12 in \cite{Kyp}
  for background.
\end{remark}

\begin{proof} The fertility process starts from $F_{\theta}(0)=1$, takes
  values in $(0,\infty)$, decays exponentially with constant rate $\theta$
  (by convention, exponential decay with rate $\theta<0$ means exponential
  increase with rate $-\theta >0$), and makes jumps of unit size
  corresponding to birth events at time $t$. That is, there is the identity
\begin{equation}\label{E:dF}
F_{\theta}(t) = Y_{\theta}(t) -\theta \int_0^t F_{\theta}(s)\d s.
\end{equation}
The claim should now be intuitively obvious since $F_{\theta}(t)$ is also
the rate at time $t$ at which the counting process $Y_{\theta}$ has a jump
of unit size.

To give a rigorous proof, we introduce the filtration
${\mathcal F}_t=\sigma({\mathbf 1}_{[0,t]}\z_n: n\in \N)$ for $t\geq 0$.
Since the point measure $\z_1$ is Poisson with intensity
$\e^{-\theta s}\d s$, the process
$$ \z_1([0,t]) -\int_0^t\e^{-\theta s} \d s, \qquad t\geq 0$$
is an $({\mathcal F}_t)$-martingale. By the branching property, we have
more generally that for any $n\geq 0$,
$$ \z_{n+1}([0,t]) -\int_0^t \int_0^s\e^{-\theta (s-r)} \z_n(\d r)  \d s, \qquad t\geq 0$$
is also an $({\mathcal F}_t)$-martingale, and summing over all generations, we conclude that 
\begin{equation} \label{E:Ycomp}  Y_{\theta}(t)-\int_0^t F_{\theta}(s)\d s \qquad \text{
is an $({\mathcal F}_t)$-martingale. }
\end{equation}
As $Y_{\theta}$ is a counting process, we deduce from \eqref{E:dF} that for
any bounded ${\mathcal C}^1$ function $f:(0,\infty)\to \R$ with bounded
derivative, there is the identity
$$f(F_{\theta}(t))-f(1)= -\theta \int_0^t F_{\theta}(s) f'(F_{\theta}(s)) \d s + \int_0^t(f(F_{\theta}(s-)+1)-f(F_{\theta}(s-))\d Y_{\theta}(s).$$
We now see from \eqref{E:Ycomp} that 
$$f(F_{\theta}(t)) - \int_0^t {\mathcal G}_{\theta}(F_{\theta}(s)) \d s \qquad \text{
is an $({\mathcal F}_t)$-martingale. }
$$
It is readily checked that the martingale problem above is well-posed, and
the statement follows; see Section 4.4 in \cite{EtKu} for background.
\end{proof}

We point out that for $f(x)=x$, we get
${\mathcal G}_{\theta}f=(1-\theta ) f$, and it follows that
$\E(F_{\theta}(t))=\e^{(1-\theta )t}$ for all $t\geq 0$. We then see from
\eqref{E:dF} that for $\theta \neq 1$,
$$\E(Y_{\theta}(t))= \e^{(1-\theta )t} + \theta \int_0^t \e^{(1-\theta )s} \d s= \frac{1}{1-\theta }(\e^{(1-\theta )t}- \theta),
$$
and that  $\E(Y_{1}(t))=1+t$ for $\theta=1$, hence recovering Proposition \ref{P01}. 

\section{Poissonian representation for the tail distribution}
\label{sec:PoissonRep}
The purpose of this section is to point at the following representation of
the tail distribution of the two-parameter Yule-Simon distribution.  We
first introduce a standard Poisson process $N=(N(t))_{t\geq 0}$. We
write $$\gamma(n)\coloneqq \inf\{t>0: N(t)=n\}$$ for every $n\in \N$ (so
that $\gamma(n)$ has the Gamma distribution with parameters $(n,1)$), and
\begin{equation}\label{E:zeta}
\zeta_{\theta}\coloneqq \inf\{t>0: N(t)+1-\theta t =0\}
\end{equation} for $\theta\in \R$ (in particular $\zeta_{\theta}= \infty$ a.s. when $\theta \leq 0$). 

\begin{proposition}\label{P04} Let $\theta \in \R$ and $\rho >0$. For every
  $n\in \N$, one has
$$\P(X_{\theta, \rho}>n) = \E\left( \exp\left( -\rho\int_0^{\gamma(n)} (N(t)+1-\theta t)^{-1} \d t\right) {\mathbf 1}_{\gamma(n)<\zeta_{\theta}}\right).$$
\end{proposition}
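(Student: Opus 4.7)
The plan is to re-express $\{X_{\theta,\rho}>n\}$ as an event involving the first passage time of $Y_{\theta}$ above level $n$, and then to perform a Doob--Meyer time change of the counting process $Y_{\theta}$. Let $\tau_n \coloneqq \inf\{t>0: Y_{\theta}(t)>n\}$ denote the $n$-th jump time of $Y_{\theta}$ (recall $Y_{\theta}(0)=1$). Since $T_{\rho}$ is independent of $Y_{\theta}$ and exponentially distributed with parameter $\rho$,
$$\P(X_{\theta,\rho}>n) \;=\; \P(T_{\rho}>\tau_n) \;=\; \E\!\left(\e^{-\rho\tau_n}\mathbf{1}_{\tau_n<\infty}\right),$$
so the task reduces to showing pathwise that $\tau_n = \int_0^{\gamma(n)}(N(t)+1-\theta t)^{-1}\d t$ on $\{\gamma(n)<\zeta_{\theta}\}$, and $\tau_n=\infty$ otherwise.

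Next, I would invoke Watanabe's time-change theorem for counting processes. By \eqref{E:Ycomp}, $Y_{\theta}-1$ is a counting process with continuous and strictly increasing compensator $A(t)\coloneqq \int_0^t F_{\theta}(s)\d s$ (strict monotonicity follows from $F_{\theta}>0$, since $F_{\theta}$ starts at $1$ and only decays continuously between positive unit jumps). Hence, with $A(\infty)\coloneqq\lim_{t\to\infty}A(t)\in(0,\infty]$, there exists a standard Poisson process $N$ on $[0,\infty)$ such that $Y_{\theta}(t)-1 = N(A(t))$ for every $t\geq 0$, and consequently $\tau_n = A^{-1}(\gamma(n))$ on $\{\gamma(n)<A(\infty)\}$ while $\tau_n=\infty$ otherwise. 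The relation $\d A/\d t = F_{\theta}(t)$ rewrites this as
$$\tau_n \;=\; \int_0^{\gamma(n)}\frac{\d u}{G(u)},\qquad G(u)\coloneqq F_{\theta}(A^{-1}(u)).$$

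It remains to identify $G$ in closed form in terms of $N$. From \eqref{E:dF}, $F_{\theta}$ drifts at rate $-\theta F_{\theta}(t)$ between unit jumps occurring at the jump times of $Y_{\theta}$; passing to the operational time $u=A(t)$ divides this drift by $F_{\theta}$ itself, producing constant drift $-\theta$, while the unit jumps are preserved and now occur precisely at the jump times of $N$. Together with $G(0)=F_{\theta}(0)=1$, this forces pathwise the identification
$$G(u) \;=\; N(u)+1-\theta u,$$
valid for as long as the right-hand side remains positive; hence $A(\infty)=\zeta_{\theta}$, and substitution into the previous display gives the stated formula. The main technical obstacle is exactly this operational-time computation: beyond the standard time-change machinery (which yields $N$), one must verify pathwise that the random clock $A$ transforms the stochastic ODE driving $F_{\theta}$ into the explicit process $N(u)+1-\theta u$, and, for $\theta>0$, handle the possible finiteness of $A(\infty)=\zeta_{\theta}$, which corresponds to extinction of $Y_{\theta}$ on that event.
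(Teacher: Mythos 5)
Your proof is correct, and while it arrives at the same time--change identity as the paper, it reaches it from the opposite direction. The paper starts from the generator of the fertility process $F_\theta$ established in Proposition \ref{P03}, recognizes it as the generator of a time-changed Poisson-process-with-drift, and invokes Volkonskii's formula (equivalently, the Lamperti transformation for continuous-state branching processes) to realize $F_\theta=\xi_\theta\circ\sigma_\theta$; the relation $Y_\theta=1+N\circ\sigma_\theta$ is then read off from \eqref{E:dF}. You instead begin on the counting-process side: you take the compensator $A(t)=\int_0^t F_\theta(s)\,\d s$ of $Y_\theta-1$ from the martingale property \eqref{E:Ycomp}, apply the Watanabe/Meyer random time-change theorem for point processes to obtain $Y_\theta-1=N\circ A$, and then identify $G=F_\theta\circ A^{-1}=N(\cdot)+1-\theta\,\cdot$ by a pathwise operational-time computation from the SDE \eqref{E:dF}. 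Your route bypasses Proposition \ref{P03} and the generator/Markov machinery entirely, relying only on \eqref{E:Ycomp} and \eqref{E:dF}, which is arguably more elementary; the paper's route, by contrast, makes the connection to CSBPs and the Lamperti transform explicit (cf.\ Remark \ref{R01}). The one place your argument needs care is when $\theta>0$ and $A(\infty)<\infty$: the time-change theorem only produces a Poisson process on $[0,A(\infty))$, so one must, as you flag, extend $N$ to a standard Poisson process on $[0,\infty)$ and then verify pathwise that $\zeta_\theta=A(\infty)$, which follows because $F_\theta(t)\to 0$ as $t\to\infty$ on the extinction event $\{Y_\theta(\infty)<\infty\}$.
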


This identity could be inferred from \cite{Be}; for the sake of
completeness, we shall provide here an independent proof based on
Proposition \ref{P03} and the identity \eqref{E:dF}.
\begin{proof}[Proof of Proposition \ref{P04}]
  Observe from Proposition \ref{P03} that the infinitesimal generator
  ${\mathcal G}_{\theta}$ of the fertility process fulfills
$$x^{-1} {\mathcal G}_{\theta}f(x)=-\theta  f'(x)+(f(x+1)-f(x)), \qquad x>0,$$
and that the right-hand side is the infinitesimal generator of a standard
Poisson process with drift $-\theta $ absorbed at $0$.  If we write
$$\xi_{\theta}(t)\coloneqq N(t\wedge \zeta_{\theta})+1-\theta (t\wedge \zeta_{\theta}) \qquad \text{for }t\geq 0 , $$
so that the process $\xi_{\theta}$ is that described above and started from
$\xi_{\theta}(0)=1$, then by Volkonskii's formula (see e.g. Formula (21.6)
of Section III.21 in \cite{RogWil}), the fertility can be expressed as a
time-change of $\xi_{\theta}$. Specifically, the map
$t\mapsto \int_0^t \xi_{\theta}(s)^{-1}\d s$ is bijective from
$[0,\zeta_{\theta})$ to $\R_+$, and if we denote its inverse by
$\sigma_{\theta}$, then the processes $F_{\theta}$ and
$\xi_{\theta}\circ \sigma_{\theta}$ have the same distribution; we can
henceforth assume that they are actually identical.

In this setting, we can further identify
$\sigma_{\theta}(t)=\int_0^t F_{\theta}(s)\d s$ and then deduce from
\eqref{E:dF} that $Y_{\theta}(t) =1+N(\sigma_{\theta}(t))$. As a
consequence, if we write
$$\tau_{\theta}(n)\coloneqq \inf\{t>0: Y_{\theta}(t)>n\},$$
 then we have also 
$$
\tau_{\theta}(n)= \inf\{t>0: N\circ \sigma_{\theta}(t)=n\}
=\left\{
\begin{matrix} \int_0^{\gamma(n)}\xi_{\theta}(s)^{-1} \d s &\text{if } \gamma(n)<\zeta_{\theta}, \\
\infty &\text{otherwise.}
\end{matrix}
\right.
$$

Finally, recall from Definition \ref{D1} that $T_{\rho}$ has the
exponential distribution with parameter $\rho >0$ and is independent of
$Y_{\theta}$, so
$$\P(X_{\theta, \rho}>n) = \P(Y_{\theta}(T_{\rho})>n) = \E\left(\exp(-\rho \tau_{\theta}(n)) {\mathbf 1}_{\tau_{\theta}(n)<\infty}\right).$$
This completes the proof. 
\end{proof}

\begin{remark} Following up Remark \ref{R01}, the application of
  Volkonskii's formula in the proof above amounts to the well-known
  Lamperti's transformation that relates continuous state branching
  processes and L\'evy processes without negative jumps via a time-change;
  see \cite{CLUB} for a complete account.
\end{remark}

We conclude this section by pointing at a simple inequality between the
tail distributions of Yule-Simon processes with different parameters. 
\begin{corollary}\label{C3} 
\begin{itemize}
\item[(i)] The random variable $X_{\theta, \rho}$ decreases stochastically
  in the parameters $\theta$ and $\rho$. That is, for every
  $\theta' \geq \theta$ and $\rho'\geq \rho >0$, one has
$$\P(X_{\theta', \rho'}>n) \leq \P(X_{\theta, \rho}>n)\qquad \text{for all }n\in\N.$$
\item[(ii)] For every $\theta\in \R$, $\rho >0$ and $a>1$, one has
$$\P(X_{\theta, \rho}>n)^a \leq \P(X_{\theta, a\rho}>n)\qquad \text{for all }n\in\N.$$
\end{itemize}
\end{corollary}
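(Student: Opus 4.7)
The plan is to deduce both inequalities from the Poissonian representation in Proposition~\ref{P04}, working on a single probability space carrying the standard Poisson process $N$ (so that $\gamma(n)$, $\zeta_\theta$ and the integrand $(N(t)+1-\theta t)^{-1}$ are simultaneously defined for all values of the parameters). Write
$$R_\theta(n)\coloneqq \int_0^{\gamma(n)} (N(t)+1-\theta t)^{-1}\,\d t \quad \text{on } \{\gamma(n)<\zeta_\theta\},$$
and set $R_\theta(n)=+\infty$ otherwise, so that Proposition~\ref{P04} reads $\P(X_{\theta,\rho}>n)=\E(\exp(-\rho R_\theta(n)))$, with the usual convention $\exp(-\infty)=0$.

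For part (i), monotonicity in $\rho$ is immediate: for $\rho'\geq\rho$, $\exp(-\rho' R_\theta(n))\leq\exp(-\rho R_\theta(n))$ pointwise (valid also when $R_\theta(n)=\infty$), and taking expectations gives the result. Monotonicity in $\theta$ requires two elementary pathwise observations. First, since $t\mapsto N(t)+1-\theta t$ decreases in $\theta$ at each $t>0$, the hitting time of $0$ satisfies $\zeta_{\theta'}\leq\zeta_\theta$ almost surely for $\theta'\geq\theta$, so $\{\gamma(n)<\zeta_{\theta'}\}\subseteq\{\gamma(n)<\zeta_\theta\}$. Second, on the smaller event the pointwise inequality $(N(t)+1-\theta' t)^{-1}\geq (N(t)+1-\theta t)^{-1}$ holds for every $t\in[0,\gamma(n)]$, whence $R_{\theta'}(n)\geq R_\theta(n)$. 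Combining, $\exp(-\rho R_{\theta'}(n))\leq \exp(-\rho R_\theta(n))$ pointwise, and taking expectations yields the monotonicity in $\theta$. Composing both monotonicities gives (i).

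For part (ii), fix $\theta\in\R$, $\rho>0$ and $a>1$, and apply Jensen's inequality with the convex function $x\mapsto x^a$ on $[0,\infty)$ to the non-negative random variable $\exp(-\rho R_\theta(n))$:
$$\P(X_{\theta,\rho}>n)^a = \E\bigl(\exp(-\rho R_\theta(n))\bigr)^a \leq \E\bigl(\exp(-a\rho R_\theta(n))\bigr) = \P(X_{\theta,a\rho}>n).$$
The identification of the right-hand side with $\P(X_{\theta,a\rho}>n)$ is just Proposition~\ref{P04} applied with parameter $a\rho$ in place of $\rho$.

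There is no genuinely hard step here; the only point that deserves a moment's attention is the coupling argument for monotonicity in $\theta$, namely that one must realize $X_{\theta,\rho}$ and $X_{\theta',\rho}$ on the same Poisson process so that both the inequality $\zeta_{\theta'}\leq\zeta_\theta$ and the pointwise ordering of integrands can be used simultaneously. Once that is set up, everything reduces to pointwise inequalities under the expectation.
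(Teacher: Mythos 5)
Your proof is correct. For part (i), you elaborate the route the paper mentions only in passing (``Alternatively, we can also deduce the claim by inspecting Proposition~\ref{P04}''), whereas the paper's primary argument is a coupling at the level of the branching processes themselves: for $\theta'\geq\theta$, the process $Y_{\theta'}$ can be obtained from $Y_\theta$ by thinning (killing individuals together with their descent), so $Y_{\theta'}(t)\leq Y_\theta(t)$ pathwise; coupling $T_{\rho'}=(\rho/\rho')T_\rho\leq T_\rho$ and using that the counting processes are non-decreasing then yields $X_{\theta',\rho'}\leq X_{\theta,\rho}$ pathwise, hence the stochastic ordering. The branching-process coupling has the merit of being immediately intuitive (a faster-fading fertility yields a smaller population at every instant), while your argument via Proposition~\ref{P04} is more mechanical — you check $\zeta_{\theta'}\leq\zeta_\theta$ and the pointwise ordering of the integrands — and avoids having to construct the thinning. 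Your verification of those two facts (the ordering of hitting times of $0$ and the ordering of the reciprocals on $[0,\gamma(n)]$ under the smaller event) is sound. For part (ii), your appeal to Jensen's inequality with the convex power $x\mapsto x^a$ is equivalent to the paper's H\"older argument, merely phrased differently.
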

\begin{proof} (i) It should be plain from the construction of the general
  branching process $Y_{\theta}$ in the preceding section, that for any
  $\theta\leq \theta'$, one can obtain $Y_{\theta'}$ from $Y_{\theta}$ by
  thinning (i.e. random killing of individuals and their descent). In
  particular $Y_{\theta}$ and $Y_{\theta'}$ can be coupled in such a way
  that $Y_{\theta}(t) \geq Y_{\theta'}(t)$ for all $t\geq 0$.  Obviously,
  we may also couple $T_{\rho}$ and $T_{\rho'}$ such that
  $T_{\rho}\geq T_{\rho'}$ (for instance by defining
  $T_{\rho'}=\frac{\rho}{\rho'} T_{\rho}$), and our claim follows from the
  fact that individuals are eternal in the population model. Alternatively,
  we can also deduce the claim by inspecting Proposition \ref{P04}.

(ii) This follows immediately from H\"older's inequality and Proposition \ref{P04}.
\end{proof}

\section{Tail asymptotic  behaviors}
\label{sec:Tails}
We now state the main results of this work which completes that of
\cite{Be}.  The asymptotic behavior of the tail distribution of a two
parameter Yule-Simon distribution exhibits a phase transition between
exponential and power decay for the critical parameter $\theta =1$; here is
the precise statement.

\begin{theorem}\label{T1} Let $\rho>0$.
\begin{enumerate}
\item [(i)] If $\theta   <1$, then  there exists a constant $C=C(\theta,
  \rho)\in(0,\infty)$ such that, as $n\to
  \infty$:
$$\P(X_{\theta, \rho}>n)\sim C n^{-\rho/(1-\theta  )}.
$$
\item [(ii)]   If $\theta >1$, then   as $n\to
  \infty$:
$$\ln \P(X_{\theta, \rho}>n)\sim -(\ln \theta-1+1/\theta)n. 
$$
\end{enumerate}
\end{theorem}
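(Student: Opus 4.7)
The plan is to work throughout from the Poissonian representation of Proposition~\ref{P04},
$$\P(X_{\theta,\rho}>n)=\E\!\left[e^{-\rho I_n}\mathbf{1}_{\gamma(n)<\zeta_{\theta}}\right],\qquad I_n\coloneqq\int_0^{\gamma(n)}\xi_{\theta}(t)^{-1}\,\d t,\ \ \xi_\theta(t)\coloneqq N(t)+1-\theta t,$$
and to control $I_n$ and the indicator separately in each regime. A convenient piecewise integration between the jump times $\gamma(i)$ of $N$, with $Y_i\coloneqq i-\theta\gamma(i)$ and $E_i\coloneqq\gamma(i)-\gamma(i-1)\sim\mathrm{Exp}(1)$ (so that $Y_{i-1}+1=Y_i+\theta E_i$), yields for $\theta\ne 0$ the explicit formula
$$I_n=\frac{1}{\theta}\sum_{i=1}^n\ln\!\left(1+\frac{\theta E_i}{Y_i}\right)\qquad\text{on }\{\gamma(n)<\zeta_\theta\}=\{Y_i>0\ \forall\, 1\le i\le n\}.$$

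For (i), when $\theta<1$ the survival event $\{\zeta_\theta=\infty\}$ has positive probability (the Poisson process with drift $-\theta$ has positive overall drift $1-\theta$), and on it the strong law of large numbers gives $\xi_\theta(t)\sim(1-\theta)t$. I would write
$$\xi_\theta(t)^{-1}-\frac{1}{(1-\theta)t}=-\frac{1+M(t)}{(1-\theta)t\,\xi_\theta(t)},\qquad M(t)\coloneqq N(t)-t,$$
and observe that the right-hand side is almost surely integrable at infinity thanks to $M(t)=O(\sqrt{t\ln\ln t})$. Combined with $\gamma(n)/n\to 1$, this produces a finite random variable $W$ defined on $\{\zeta_\theta=\infty\}$ such that $I_n-\ln n/(1-\theta)\to W$ almost surely. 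Since $\{\gamma(n)<\zeta_\theta\}\downarrow\{\zeta_\theta=\infty\}$ and $0\le e^{-\rho I_n}\le 1$, dominated convergence then gives
$$n^{\rho/(1-\theta)}\,\P(X_{\theta,\rho}>n)\longrightarrow\E\bigl[e^{-\rho W}\mathbf{1}_{\zeta_\theta=\infty}\bigr]\in(0,\infty),$$
identifying the constant $C(\theta,\rho)$.

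For (ii), when $\theta>1$ the inclusion $\{\gamma(n)<\zeta_\theta\}\subseteq\{\gamma(n)<n/\theta\}$ combined with Cram\'er's theorem applied to the i.i.d.\ $\mathrm{Exp}(1)$ sum $\gamma(n)$ (rate function $I(x)=x-1-\ln x$, so that $I(1/\theta)=\ln\theta-1+1/\theta$) immediately yields the upper bound
$$\P(X_{\theta,\rho}>n)\le\P(\gamma(n)<n/\theta)\le e^{-n(\ln\theta-1+1/\theta)}.$$
For the matching lower bound, I would perform an exponential tilt: for $\delta>0$ small let $\tilde\P_\delta$ be the law under which the $E_i$ are i.i.d.\ $\mathrm{Exp}(\mu)$ with $\mu\coloneqq\theta(1+\delta)$, so that $\d\P/\d\tilde\P_\delta=\mu^{-n}e^{(\mu-1)\gamma(n)}$ on $\sigma(E_1,\ldots,E_n)$. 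Under $\tilde\P_\delta$ the LLN gives $\gamma(n)/n\to 1/\mu$ and $Y_i/i\to\delta/(1+\delta)$, so the event
$$B_n=\bigl\{\gamma(i)<i/\theta\ \forall\, i\le n\bigr\}\cap\bigl\{Y_i\ge c\,i\ \forall\, K\le i\le n\bigr\}\cap\bigl\{|\gamma(n)-n/\mu|\le\sqrt n\bigr\}$$
has $\tilde\P_\delta$-probability bounded below by a constant $p_\delta>0$ for $c=\delta/(2(1+\delta))$ and $K$ large enough. On $B_n$ the formula above gives $I_n=O(\ln n)$ (the $i$-th summand is $O(1/i)$ once $Y_i\ge ci$, and the first $K$ terms contribute $O(1)$ almost surely), so that
$$\P(X_{\theta,\rho}>n)\ge\tilde\E_\delta\!\left[e^{-\rho I_n}\mathbf{1}_{B_n}\mu^{-n}e^{(\mu-1)\gamma(n)}\right]\ge p_\delta\,n^{-O(1)}\,e^{-n(\ln\mu-1+1/\mu)+O(\sqrt n)},$$
hence $\liminf_n n^{-1}\ln\P(X_{\theta,\rho}>n)\ge-(\ln\mu-1+1/\mu)$; letting $\delta\downarrow 0$ closes the gap.

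The main technical obstacle sits in (i), where one must promote the heuristic expansion of $I_n$ to genuine almost sure convergence and verify dominated convergence so that the limiting constant is positive (the nontriviality of $\{\zeta_\theta=\infty\}$ plays a crucial role). In (ii) the delicate step is the uniform lower bound $\tilde\P_\delta(B_n)\ge p_\delta$, which is routine from LLN/CLT and fluctuation-theory estimates for the tilted random walk $(Y_i)$; the essential observation is that tilting just past the critical rate $\mu=\theta$ forces $Y_i$ to grow linearly, which alone ensures that the factor $e^{-\rho I_n}$ contributes only polynomial, not exponential, corrections.
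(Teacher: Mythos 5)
Your treatment of part~(ii) is sound and is genuinely different from the paper's. The paper deduces both bounds from Lemma~\ref{L1}, an explicit Kemperman/Borel-type formula for the first-passage time $\nu(x)$; you instead invoke Cram\'er's theorem (equivalent for the upper bound) and, for the lower bound, an exponential tilt that pushes the drift of the Poisson process just above $\theta$ so that the walk $Y_i=i-\theta\gamma(i)$ grows linearly. The paper achieves the same effect by working on the event $\Lambda(n,\theta,\varepsilon)=\{N(t)+1-\varepsilon-(\theta+\varepsilon)t\geq 0\text{ on }[0,\gamma(n)]\}$, on which $\xi_\theta(t)\geq\varepsilon(1+t)$, and bounding $\P(\Lambda(n,\theta,\varepsilon))$ again by Lemma~\ref{L1}. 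Both routes give the sharp exponential rate once $\varepsilon$ (resp.\ $\delta$) is sent to $0$; the paper's is more self-contained, yours plugs into the standard LDP machinery. The one place where you owe more detail is the uniform-in-$n$ lower bound $\tilde\P_\delta(B_n)\geq p_\delta>0$, particularly the persistence event $\{Y_i\geq ci\ \forall\,K\leq i\leq n\}$; this is fine for a positively-drifted walk but is not literally ``routine'' and deserves a couple of lines.

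For part~(i) the paper does not argue from the Poissonian representation at all: it appeals to Proposition~\ref{P02}(ii), i.e.\ the Malthusian convergence $\e^{(\theta-1)t}Y_\theta(t)\to W_\theta$ with $W_\theta\in L^k$ for all $k$, and then integrates against the exponential law of $T_\rho$ (this is the route of Theorem~1(ii) in \cite{Be}, which the paper explicitly declines to repeat). Your route through Proposition~\ref{P04} is therefore a genuinely new argument, and the pointwise analysis is correct: on $\{\zeta_\theta=\infty\}$ one does have $I_n-\tfrac{\ln n}{1-\theta}\to W$ a.s.\ (with the harmless caveat that you must subtract the comparison term $\int\frac{\d t}{(1-\theta)t}$ starting from some fixed $t_0>0$, not from $0$). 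The genuine gap is the last step: to conclude
$$n^{\rho/(1-\theta)}\P(X_{\theta,\rho}>n)=\E\!\left[\e^{-\rho\left(I_n-\frac{\ln n}{1-\theta}\right)}\mathbf{1}_{\gamma(n)<\zeta_\theta}\right]\longrightarrow\E\!\left[\e^{-\rho W}\mathbf{1}_{\zeta_\theta=\infty}\right]$$
you need domination or uniform integrability of $\e^{-\rho(I_n-\ln n/(1-\theta))}\mathbf{1}_{\gamma(n)<\zeta_\theta}$, and the bound $0\leq\e^{-\rho I_n}\leq 1$ you invoke dominates the \emph{un}-normalized integrand, not the normalized one. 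The issue is real: on the event where $N$ runs far ahead of its mean (so that $\xi_\theta$ is unusually large for a long time), $I_n$ can fall well short of $\tfrac{\ln n}{1-\theta}$, and showing that those large-deviation contributions are negligible after multiplying by $n^{\rho/(1-\theta)}$ is precisely the hard part. For instance, the elementary lower bound $\xi_\theta(t)\leq N(t)+1$ gives $I_n\geq\sum_{i\leq n}E_i/i$ and hence $\E[\e^{-\rho I_n}]\lesssim n^{-\rho}$, which is \emph{not} enough since $\rho/(1-\theta)>\rho$ for $\theta\in(0,1)$. You would need a sharper control (e.g.\ a quantitative version of $\gamma(i)\gtrsim i$ together with an estimate of its exceptional set, or a uniform-integrability argument exploiting moments of the Malthusian limit, which is essentially Proposition~\ref{P02}(ii) again). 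As written, the proof of (i) is incomplete at this step.
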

This phase transition can be explained as follows. We rewrite Proposition \ref{P04} in the form
$$\P(X_{\theta, \rho}>n) = \E\left( \exp\left( -\rho\int_0^{\gamma(n)}
    (N(t)+1-\theta t)^{-1} \d t\right) \mid
  {\gamma(n)<\zeta_{\theta}}\right) \times
\P({\gamma(n)<\zeta_{\theta}}).$$ 
On the one hand, the probability that $\gamma(n)<\zeta_{\theta}$ remains
bounded away from $0$ when $\theta<1$ and decays exponentially fast when
$\theta>1$.  On the other hand, for $\theta<1$, the integral
$\int_0^{\gamma(n)} (N(t)+1-\theta t)^{-1} \d t$ is of order $\ln n$ on the
event $\{\gamma(n)<\zeta_{\theta}\}$, and therefore the first term in the
product decays as a power of $n$ when $n$ tends to infinity.  Last, when
$\theta >1$, the first term in the product decays sub-exponentially fast.

In the critical case $\theta=1$, we observe from the combination of Theorem
\ref{T1} and Corollary \ref{C3} that the tail of $X_{1,\rho}$ is neither
fat nor light, in the sense that
$$\exp(-\alpha n) \ll \P(X_{1, \rho}>n)\ll n^{-\beta}$$ for all $\alpha,
\beta >0$. We obtain a more precise estimate of stretched exponential
type. In the following statement, $f\lesssim g$ means
$\limsup_{n\rightarrow\infty}{f(n)}/{g(n)}\leq 1$.

\begin{theorem}\label{T2} Consider the critical case $\theta =1$, and let
  $\rho>0$. Then we have as $n\rightarrow\infty$:
$$-20(\rho^2n)^{1/3}\lesssim \ln \P(X_{1, \rho}>n)\lesssim -(1/2)^{1/3}(\rho^2n)^{1/3}.
$$
\end{theorem}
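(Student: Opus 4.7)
The proof rests on the Poisson integral representation of Proposition~\ref{P04}. Setting $\xi_1(t) := N(t) + 1 - t$ (as in its proof), the representation reads
\begin{equation*}
\P(X_{1,\rho} > n) = \E\bigl[\exp(-\rho R_n)\, \mathbf{1}_{\gamma(n) < \zeta_1}\bigr], \qquad R_n := \int_0^{\gamma(n)} \xi_1(s)^{-1}\,\d s,
\end{equation*}
where $\xi_1$ is a centered random walk (a unit-rate Poisson process with drift $-1$), whose typical magnitude by time $n$ is of order $\sqrt{n}$. The heuristic that drives both bounds is the following: forcing $\xi_1$ to hover near a level $M$ throughout $[0, \gamma(n)]$ makes $R_n \asymp n/M$, while such a deviation costs probability roughly $\exp(-M^2/(2n))$; the balanced choice $M^3 \asymp \rho n^2$ makes both contributions of order $(\rho^2 n)^{1/3}$.

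For the upper bound I would split the expectation by the maximum of $\xi_1$. For any $M > 0$,
\begin{equation*}
\P(X_{1,\rho} > n) \leq \E\bigl[e^{-\rho R_n}\, \mathbf{1}_{\{\sup_{[0,\gamma(n)]}\xi_1 \leq M\}}\bigr] + \P\bigl(\sup\nolimits_{[0,\gamma(n)]}\xi_1 > M\bigr).
\end{equation*}
On the first event, $\xi_1 \leq M$ forces $N(t) \leq t + M - 1$, hence $\gamma(n) \geq n + 1 - M$, hence $R_n \geq (n+1-M)/M$, so the first term is at most $\exp(-\rho(n+1-M)/M)$. For the second event, any $t \leq \gamma(n)$ with $\xi_1(t) > M$ also satisfies $t \leq n$ (since $N(t) \leq n$), so that the event lies in $\{\sup_{t \leq n}(N(t)-t) \geq M-1\}$, and the classical Chernoff--Cram\'er bound (via Doob applied to the martingale $e^{\lambda(N(t)-t)-t(e^\lambda-1-\lambda)}$) gives
\begin{equation*}
\P\bigl(\sup\nolimits_{t\leq n}(N(t)-t) \geq M-1\bigr) \leq e^{-n\, h((M-1)/n)}, \qquad h(x) := (1+x)\ln(1+x) - x,
\end{equation*}
which behaves like $\exp(-M^2/(2n))$ when $M \ll n$. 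Choosing $M = (2\rho n^2)^{1/3}$ equates both exponents to $(1/2)^{1/3}(\rho^2 n)^{1/3}(1+o(1))$, yielding the upper bound.

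For the lower bound I would exhibit an explicit favorable event. Take $M \asymp (\rho n^2)^{1/3}$ and $T_0 < n$ of the same order as $n$ (say $T_0 = n/2$), and consider $A_1 := \{N(T_0) \geq T_0 + M\}$, which forces $\xi_1(T_0) \geq M+1$, together with $A_2 := \{\inf_{0 \leq s \leq n-T_0}[N(T_0+s) - N(T_0) - s] \geq -M/2\}$, which keeps $\xi_1(T_0 + s) \geq M/2 + 1$ for $s \in [0,n-T_0]$. A local-CLT refinement of the Cram\'er inequality supplies $\P(A_1) \gtrsim T_0^{-1/2} \exp(-M^2/(2T_0))$, while the strong Markov property at $T_0$ combined with a reflection-type inequality shows $\P(A_2 \mid A_1) = 1 - o(1)$, since $M/\sqrt{n} \asymp n^{1/6} \to \infty$. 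On $A_1 \cap A_2$ one has $\gamma(n) \in [T_0, n - M/2]$ and
\begin{equation*}
R_n \leq \int_0^{T_0}\xi_1(s)^{-1}\,\d s + \frac{2n}{M},
\end{equation*}
the second piece being of the target order $\rho^{-1/3} n^{1/3}$. The first piece is bounded via a Poisson-bridge estimate on $[0,T_0]$; combining the contributions (and being generous with constants) yields $\ln \P(X_{1,\rho} > n) \geq -20(\rho^2 n)^{1/3}$.

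I expect the main obstacle to be the lower bound. The upper bound is clean because the single parameter $M$ simultaneously bounds $R_n$ from below and $\P(\sup \xi_1 > M)$ from above. The lower bound is more delicate: since $\xi_1(0) = 1$, one cannot demand $\xi_1 \geq M$ globally, so an initial climb of duration $T_0$ is unavoidable; this costs probability $\exp(-M^2/(2T_0))$ while the integral $\int_0^{T_0}\xi_1^{-1}\,\d s$ over the climb contributes, on typical trajectories, a term of order $T_0(\log M)/M$ to $R_n$. The competition between these two costs is what prevents matching the sharp constant $(1/2)^{1/3}$ of the upper bound and forces the non-sharp constant $20$ in the statement, rather than the optimal value $3/2$ that the heuristic would suggest.
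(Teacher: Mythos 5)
Your upper bound is correct and is essentially the paper's argument in a slightly reorganized form: the paper forces $\gamma(n) \geq (1-\varepsilon)n$ and bounds $\sup_{t\leq(1-\varepsilon)n}\xi(t)$ using optional stopping of the exponential martingale, whereas you bound $\sup_{[0,\gamma(n)]}\xi_1$ and apply a Doob--Chernoff bound over $[0,n]$; both routes produce the same constant $(1/2)^{1/3}$.

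The lower bound, however, has a genuine gap. Your strategy --- climb to a plateau of height $M\asymp(\rho n^2)^{1/3}$ during $[0,T_0]$ with $T_0\asymp n$, then hover there --- unavoidably produces a logarithmic excess in the integral cost. Conditionally on $A_1=\{N(T_0)\geq T_0+M\}$, the path of $\xi_1$ on $[0,T_0]$ behaves like a bridge from $1$ to $M+1$, so on typical trajectories $\xi_1(s)\asymp Ms/T_0$, and consequently
\begin{equation*}
\int_0^{T_0}\xi_1(s)^{-1}\,\d s \asymp \frac{T_0}{M}\log M,
\end{equation*}
exactly as your own heuristic notes. Multiplying by $\rho$ and plugging in $T_0=n/2$, $M\asymp(\rho n^2)^{1/3}$ gives a contribution of order $(\rho^2n)^{1/3}\log n$, which dominates both $M^2/(2T_0)$ and $\rho\cdot 2n/M$ and hence yields only $\ln\P(X_{1,\rho}>n)\gtrsim -C(\rho^2n)^{1/3}\log n$. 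No choice of $T_0$ and $M$ removes this factor: the penalty is intrinsic to any strategy that climbs roughly linearly from $\xi_1(0)=1$ to a fixed plateau, because $\int_0^{T_0}(Ms/T_0)^{-1}\,\d s$ is logarithmically divergent at $s=0$. This is not a matter of a non-sharp constant (as you suggest when attributing the $20$ to this competition); it is a loss of the correct $n^{1/3}$ stretching exponent. The paper avoids this by abandoning the plateau picture entirely: it works on the event that $\xi_1(t)>\rho^{1/3}t^{2/3}$ for all $t\in(0,(1+\varepsilon)n]$. Because the boundary grows like $t^{2/3}$ from the origin, $\int_0^{(1+\varepsilon)n}\rho^{-1/3}t^{-2/3}\,\d t=3\rho^{-1/3}((1+\varepsilon)n)^{1/3}$ converges with no log, contributing the factor $\exp(-4(\rho^2n)^{1/3})$; and the probability of the random walk staying above this moving boundary is controlled by Portnoy's boundary-crossing theorem, giving $\exp(-16(\rho^2n)^{1/3})$. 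The moving boundary (and hence Portnoy's estimate, or an equivalent) is precisely the ingredient your proposal is missing.
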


\begin{remark}
Note that Theorems \ref{T1} and \ref{T2}  entail that the
  series $\sum_{n\geq 0} \P(X_{\theta, \rho}>n)$ converges if and only if
  $\theta + \rho >1$, in agreement with Remark \ref{R00}.
  
  The methods we use to prove Theorem \ref{T2} seem not to be fine enough
  to obtain the exact asymptotics of
  $n^{-1/3}\ln \P(X_{1, \rho}>n)$. More specifically, for 
  the lower bound we employ estimates for first exits through moving
  boundaries proved by Portnoy \cite{Por} first for Brownian motion and
  then transferred via the KMT-embedding to general sums of independent
  random variables.  The constant $c_1=20$ is an (rough) outcome of our proof and
  clearly not optimal.

  For obtaining the upper bound, we consider an appropriate exponential
  martingale and apply optional stopping. The constant $c_2=(1/2)^{1/3}$
  provides the best value given our method, but is very likely not the
  optimal value neither.
\end{remark}

Theorem \ref{T1}(i) has been established in Theorem 1(ii) of \cite{Be} in
the case $\theta\in(0,1)$ and $\rho>1$. Specifically, the parameters
$\alpha$ and $\bar p$ there are such that, in the present notation,
$\theta=\alpha/\bar p(\alpha+1)$ and $\bar p(\alpha+1)=1/\rho$. Taking this
into account, we see that the claim here extends Theorem 1(ii) in \cite{Be}
to a larger set of parameters. The argument is essentially the same,
relying now on Proposition \ref{P02}(ii) here rather than on the less
general Corollary 2 in \cite{Be}, and we won't repeat it.

We next turn our attention to the proof of Theorem \ref{T1}(ii), which
partly relies on the following elementary result on first-passage times of
Poisson processes with drift (we refer to \cite{Do1, Do2, DoRiv} for
related estimates in the setting of general random walks and L\'evy
processes).
\begin{lemma} \label{L1} Let $b >1$, $x> 0$, and define
  $\nu(x)\coloneqq \inf\{t>0: bt-N(t)>x\}$.  The distribution of the
  integer-valued variable $b\nu(x)-x$ fulfills
$$\P(b \nu(x)-x= n)=\frac{1}{n!} \e^{-(x+n)/b}x(x+n)^{n-1} b^{-n}
\sim \frac{ x \e^{x(1-1/b)} }{\sqrt{2\pi n^3}} \e^{n(1-1/b-\ln b)}\qquad
\text{as }n\to \infty.$$ As a consequence,
$$\lim_{t\to \infty} t^{-1}\ln \P(\nu(x)\geq t)= -(b \ln b -b +1).$$ 
\end{lemma}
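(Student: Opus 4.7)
The plan is to first derive the exact distribution of the integer-valued variable $b\nu(x) - x$, and then to read off both the asymptotic equivalent and the logarithmic rate via Stirling's formula and a standard tail summation.

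For the exact formula, I would observe that $L(t) \coloneqq bt - N(t)$ is a spectrally negative L\'evy process (positive drift $b > 1$, unit downward jumps), so its upcrossing of level $x$ is continuous and $L(\nu(x)) = x$; in particular $N(\nu(x)) = b\nu(x) - x$ is a non-negative integer. Setting $T_n \coloneqq (x+n)/b$, the event $\{b\nu(x) - x = n\}$ is the intersection of $\{N(T_n) = n\}$ with the ``no earlier crossing'' event that $N(s) \geq bs - x$ for all $s \in [0, T_n]$. Conditionally on $N(T_n) = n$, the arrival times of $N$ are distributed as the order statistics of $n$ i.i.d.\ uniform variables on $[0, T_n]$, and the classical cycle (ballot) lemma in its Poisson/uniform form identifies the conditional probability of the staying event as $x/(b T_n) = x/(x+n)$. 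Multiplying by $\P(N(T_n)=n) = T_n^n \e^{-T_n}/n!$ yields the stated closed form; Stirling's formula $n! \sim \sqrt{2\pi n}\, n^n \e^{-n}$ together with $(x+n)^{n-1} = n^{n-1}(1+x/n)^{n-1} \sim n^{n-1} \e^x$ then produces the claimed prefactor $x \e^{x(1 - 1/b)}/\sqrt{2\pi n^3}$ and exponential rate $1 - 1/b - \ln b$ after collecting factors.

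For the logarithmic tail, I would set $\alpha \coloneqq \ln b - 1 + 1/b$, which is strictly positive on $(1, \infty)$ (it vanishes at $b = 1$ and has positive derivative $(b-1)/b^2$ thereafter). Since $b\nu(x) - x$ takes non-negative integer values, $\P(\nu(x) \geq t)$ equals $\sum_{n \geq \lceil bt - x \rceil} \P(b\nu(x) - x = n)$, and by the previous step these summands decay like $n^{-3/2} \e^{-\alpha n}$. The sum is therefore dominated by its first term up to a bounded geometric factor, giving $\ln \P(\nu(x) \geq t) = -\alpha b t + O(\log t)$ as $t \to \infty$, hence the stated limit $-b\alpha = -(b \ln b - b + 1)$. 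The only delicate step is the application of the cycle lemma: one must track the correct linear boundary and verify that the relevant gap ratio is $x/(x+n)$; everything else is routine Stirling and geometric-sum asymptotics.
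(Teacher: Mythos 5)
Your proof is correct, and it takes a genuinely different (and arguably cleaner) route than the paper's. The paper conditions on $N(x/b)=k$ at the deterministic ``free flight'' time $x/b$, restarts the Poisson process, applies Kemperman's formula for the hitting time of the integer-valued random walk $j \mapsto N'(j/b)-j+k$, and then sums over $k$ via Newton's binomial theorem to recover the closed form. You instead condition directly on $\{N(T_n)=n\}$ at the candidate first-passage time $T_n=(x+n)/b$, note that the event $\{b\nu(x)-x=n\}$ is exactly this intersected with the no-earlier-crossing event, and evaluate the conditional ``staying'' probability as $x/(x+n)$ by a single application of the Poisson/uniform ballot theorem (Tak\'acs). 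One quick sanity check confirms your gap ratio: $T_n^n\e^{-T_n}/n! \cdot x/(x+n) = \frac{1}{n!}\e^{-(x+n)/b}x(x+n)^{n-1}b^{-n}$, which is the stated formula, and the $n=0$ case is also covered ($x/(x+0)=1$, and the formula reduces to $\e^{-x/b}$). Both arguments are in essence cycle-lemma arguments; the paper's is perhaps more self-contained since Kemperman's formula for left-continuous integer random walks is the more textbook-familiar statement, whereas yours avoids the extra summation and binomial step entirely. The one point you rightly flag as delicate -- that the ballot probability is indeed $x/(bT_n)=x/(x+n)$ for the boundary $bs-x$ -- is a classical identity (Tak\'acs' ballot theorem in Poisson form), and your Stirling computation and the tail summation argument for the logarithmic rate $-(b\ln b - b+1)$ are both correct and match what the paper does.
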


\begin{proof} The event $b \nu(x)=x$ holds if and only if the Poisson
  process $N$ stays at $0$ up to time $x/b$ at least, which occurs with
  probability $\e^{-x/b}$. The first identity in the statement is thus
  plain for $n=0$.  Next, note that, since the variable $b \nu(x)-x$ must
  take integer values whenever it is finite, there is the identity
$$b \nu(x)-x=\inf\{j\geq 0: N((j+x)/b )=j\}.$$
On the event $N(x/b )=k\in\N$, set $N'(t)=N(t+x/b )-k$, and write 
$$b \nu(x)-x=\inf\{j\in \N: N'(j/b )=j-k\}.$$
Since $N'$ is again a standard Poisson process, Kemperman's formula (see,
e.g. Equation (6.3) in \cite{PiSF}) applied to the random walk
$N'(\cdot/b)$ gives for any $n\geq k$
$$\P(b \nu(x)-x= n\mid N(x/b )=k)= \frac{k}{n}\cdot \frac{\e^{-n/b }(n/b )^{n-k}}{ (n-k)!} .$$
Since $N(x/b)$ has the Poisson distribution with parameter $x/b$, this
yields for any $n\geq 1$
\begin{align*}
  \P(b \nu(x)-x= n) &= \e^{-(x+n)/b}\sum_{k=1}^n  \frac{(x/b)^k}{k!}\cdot \frac{k}{n}\cdot \frac{(n/b )^{n-k}}{ (n-k)!} \\
                    &= \frac{1}{n!} \e^{-(x+n)/b} (x/b)\sum_{k=1}^n  (x/b)^{k-1} (n/b )^{n-k}\cdot \frac{(n-1)!}{(k-1)!(n-k)!}\\
                    &= \frac{1}{n!} \e^{-(x+n)/b}x(x+n)^{n-1} b^{-n},
\end{align*}
where we used Newton's binomial formula at the third line.  The second
assertion in the claim follows from Stirling's formula, and the third one
is a much weaker version.
\end{proof}

 We can now proceed to the proof of  Theorem \ref{T1}(ii). 
\begin{proof}[Proof of Theorem \ref{T1}(ii)] 
The upper-bound is easy. Indeed on the one hand, Proposition \ref{P04} yields
$$\P(X_{\theta, \rho}>n) \leq  \P({\gamma(n)<\zeta_{\theta}}),$$
and on the other hand, since $N(\zeta_{\theta})+1=\theta \zeta_{\theta}$,
on the event $\{\gamma(n)<\zeta_{\theta}\}$, one has obviously
$N(\zeta_{\theta})\geq n$, and {\sl a fortiori} $\theta
\zeta_{\theta}>n$. Thus $\P(X_{\theta, \rho}>n)$ is bounded from above by
$\P(\zeta_{\theta}>n/\theta)$, and we conclude from Lemma \ref{L1}
specialized for $x=1$ and $b=\theta$ that
$$\limsup_{n\to \infty} n^{-1} \ln \P(X_{\theta, \rho}>n) \leq -(\ln \theta - 1+1/\theta ).$$

In order to establish a converse lower bound, let $\varepsilon\in(0,1)$ be
arbitrarily small, and consider the event
$$\Lambda(n,\theta,\varepsilon)\coloneqq\{N(t)+1-\varepsilon -(\theta+\varepsilon)t\geq 0\text{ for all }0\leq t \leq \gamma(n)\}.$$
On that event, one has $N(t)+1-\theta t \geq \varepsilon(1+ t)$ for all
$0\leq t \leq \gamma(n)$, and hence
$$\exp\left( -\rho\int_0^{\gamma(n)} (N(t)+1-\theta t)^{-1} \d t\right)
\geq \left(\gamma(n)+1\right)^{-\rho/\varepsilon} \geq
(n/(\theta+\varepsilon))^{-\rho/\varepsilon},$$ where for the second
inequality, we used that
$N(\gamma(n))+1-\varepsilon \geq (\theta+\varepsilon)\gamma(n)$.

We are left with estimating $\P(\Lambda(n,\theta,\varepsilon))$. Set
$b =\theta+\varepsilon$ and use the notation of Lemma \ref{L1}, so that
$$\Lambda(n,\theta,\varepsilon)=\{\gamma(n)< \nu(1-\varepsilon)\}.$$ 
On the event $\{b \nu(1-\varepsilon)\geq n \}$, one has 
$$N(\nu(1-\varepsilon))=b  \nu(1-\varepsilon)-1+\varepsilon \geq n+\varepsilon-1,$$
so actually 
$\gamma(n)<\nu(1-\varepsilon)$.
Hence 
$\{b \nu(1-\varepsilon)\geq n\}\subset \Lambda(n,\theta,\varepsilon)$, 
and we conclude from Lemma \ref{L1}
that
$$\liminf_{n\to \infty} n^{-1} \ln \P(\Lambda(n,\theta,\varepsilon)) \geq  1-\ln b -1/b .$$

Putting the pieces together, we have shown that for any $b >\theta$
$$\liminf_{n\to \infty} n^{-1} \ln \E\left( \exp\left(
    -\rho\int_0^{\gamma(n)} (N(t)+1-\theta t)^{-1} \d t\right) {\mathbf
    1}_{\gamma(n)<\zeta_{\theta}}\right) \geq 1-\ln b -1/b. $$ Thanks to
Proposition \ref{P04}, this completes the proof.
\end{proof}

We next establish Theorem \ref{T2}. 
\begin{proof}[Proof of Theorem \ref{T2}]
  We use the abbreviations $\xi(t):=N(t)+1-t$ and $\zeta:=\zeta_1$, and
  start with the lower bound. We let $0<\varepsilon<1$. First note that
  there are the inclusions of events
  \begin{align*}\{\gamma(n)<\zeta\}&\supset\{\gamma(n)<\min\{\zeta,\,(1+\varepsilon)n\}\} \supset \{\xi(t)>0\textup{ for all }0\leq t\leq
                                     (1+\varepsilon)n\,,\,\gamma(n)<(1+\varepsilon)n\}\\
                                   &\supset \{\xi(t)>\rho^{1/3}t^{2/3}\textup{
                                     for all }0<t\leq
                                     (1+\varepsilon)n\,,\,\gamma(n)<(1+\varepsilon)n\}.
  \end{align*}
  In particular, with Proposition \ref{P04} at hand, we obtain for small
  $\varepsilon$ and large $n$
  \begin{align*}
\lefteqn{
    \P(X_{1, \rho}>n) = \E\left( \exp\left(-\rho\int_0^{\gamma(n)}
        (\xi(t))^{-1} \d t\right){\mathbf
        1}_{\gamma(n)<\zeta}\right)}\\
    &\geq \exp\left(-\rho\int_0^{(1+\varepsilon)n}\frac{1}{\rho^{1/3}t^{2/3}}\d t\right)\P\big(\xi(t)>\rho^{1/3}t^{2/3}\textup{
                                     for all }0< t\leq
                                     (1+\varepsilon)n\,,\,\gamma(n)<(1+\varepsilon)n\big)\\
    &\geq \exp\left(-4(\rho^2n)^{1/3}\right)\P\big(\xi(t)>\rho^{1/3}t^{2/3}\textup{
                                     for all }0<t\leq (1+\varepsilon)n\big)-\P\left(\gamma(n)\geq (1+\varepsilon)n\right).\end{align*}
From an elementary large deviation estimate for a sum of $n$ independent standard exponentials, we
know that for some $\lambda>0$
\begin{equation}
\label{E:T2-LD}
  \P\left(\gamma(n)\geq
    (1+\varepsilon)n\right)=O(\exp(-\lambda\,\varepsilon^2n)).\end{equation}
Therefore, our claim follows if we show a bound of the form
\begin{equation}
  \label{E:T2-1}
  \P\left(\xi(t)>\rho^{1/3}t^{2/3}\textup{
                                     for all }0<t\leq
      (1+\varepsilon)n\right)\geq \exp\left(-16(\rho^2n)^{1/3}\right)
  \end{equation}
  for large $n$. Essentially, this can be deduced
  from \cite[Theorem 4.1]{Por}: In the notation from there, we may
  consider the random walk $S_j = N(j)-j$,  $j\in\N$, and the function
  \[g(t):=\frac{3}{2}\rho^{1/3}(t+\max\{\rho,1\})^{2/3}-2\max\{\rho,\rho^{1/3}\}\,,\quad
    t\geq 0.\]
  The function $g$ is monotone increasing with $g(0)<0$ and regularly
  varying with index $2/3$. Moreover, it is readily checked that
\[\sup_{t\geq 1}\Big(g\big((2/3)t\big)-g\big((2/3)(t-1)\big)\Big)\leq 2/3\,.\]
Therefore, the assumptions of \cite[Theorem 4.1]{Por} are fulfilled, which
ensures after a small calculation that for $\varepsilon$ sufficiently
small and $n$ large enough, 
  \begin{equation}
\label{E:T2-2}
\P\left(S_j>g(j)\textup{
    for all }j=1,\ldots,\lfloor(1+\varepsilon)n\rfloor\right)\geq \exp\left(-16(\rho^2n)^{1/3}\right).\end{equation}
Now let us define for $0\leq t_0< t_1$ the event
\[\mathcal{E}(t_0,t_1):=\left\{\xi(t)>\rho^{1/3}t^{2/3}\textup{ for all }t_0< t\leq t_1\right\}.\]
For $j\in\N$ and $t\in\R$ with $j\leq t\leq j+1$, we have $\xi(t)\geq S_j$
and, provided $j\geq \rho_0:=8\lceil \rho\rceil$, also
\[g(j)\geq \rho^{1/3}(j+1)^{2/3}\geq \rho^{1/3}t^{2/3}.\] Therefore, by \eqref{E:T2-2}, 
\begin{equation}
\label{E:T2-3}
\P\left(\mathcal{E}(\rho_0,(1+\varepsilon)n)\right)\geq
\P\left(S_j>g(j)\textup{ for all
  }j=\rho_0,\ldots,\lfloor(1+\varepsilon)n\rfloor\right)\geq \exp\left(-16(\rho^2n)^{1/3}\right).\end{equation}
Writing
\[\P\left(\mathcal{E}(0,(1+\varepsilon)n)\right)=\P\left(\mathcal{E}(\rho_0,(1+\varepsilon)n)\,|\,\mathcal{E}(0,\rho_0)\right)\cdot
  \P\left(\mathcal{E}(0,\rho_0)\right),\] we note that
$\P\left(\mathcal{E}(0,\rho_0)\right)$ is bounded from below by a strictly
positive constant (depending on $\rho$). Moreover, since $\xi$ is a
spatially homogeneous Markov process, we clearly have
\[\P\left(\mathcal{E}(\rho_0,(1+\varepsilon)n)\,|\,\mathcal{E}(0,\rho_0)\right)\geq
  \P\left(\mathcal{E}(\rho_0,(1+\varepsilon)n)\right),\] so that our
claim \eqref{E:T2-1} follows from \eqref{E:T2-3}. En passant, let us
mention that $n^{1/3}$ is the correct stretch for the exponential
in \eqref{E:T2-1}. Indeed, this can be seen from Theorem 4.2
in \cite{Por}, where an analogous upper bound on the probability
in \eqref{E:T2-1} is given.

We now turn our attention to the upper bound. We fix a small
$0<\varepsilon<1$. On the event
$$\left\{ \gamma(n)\geq (1-\varepsilon)n\ \text{ and }\  \sup_{t\leq
    (1-\varepsilon)n}\xi(t) \leq (2\rho)^{1/3}n^{2/3}\right\},$$
we have 
$$
\exp\left( -\rho\int_0^{\gamma(n)} \xi(t)^{-1} \d t\right) \leq
\exp\left(-(1-\varepsilon)(\rho^2/2)^{1/3}n^{1/3}\right),$$ and from
Proposition \ref{P04}, $\P(X_{1, \rho}>n)$ can be bounded from above by
$$ \exp\left( -(1-\varepsilon)(\rho^2/2)^{1/3}
  n^{1/3}\right)+ \P(\gamma(n) <(1-\varepsilon)n ) + \P(\sup_{t\leq
  (1-\varepsilon)n}\xi(t) > (2\rho)^{1/3}n^{2/3}).$$
On the one hand, from an elementary large deviation estimate similar
to \eqref{E:T2-LD}, we get that for some $\lambda >0$: 
$$\P(\gamma(n) <(1-\varepsilon)n)=\P(N((1-\varepsilon)n)\geq n) = O(\exp(-\lambda\varepsilon^2n)).$$
On the other hand, $\xi$ is a L\'evy process with no negative jumps started
from $1$ such that
$$\E(\exp(q (\xi(t)-1)))=\exp\left(t(\e^q-1-q)\right), \qquad t\geq 0.$$
It follows classically that the process 
$$\exp\left( q\xi(t)-t(\e^q-1-q)\right), \qquad t\geq 0$$
is a martingale started from $\e^q$. An application of the optional
sampling theorem at the first passage time of $\xi$ above
$(2\rho)^{1/3}n^{2/3}$ yields the upper-bound
$$\exp\left(q(2\rho)^{1/3}n^{2/3} - (1-\varepsilon)n(\e^q-1-q)\right) \P(\sup_{t\leq (1-\varepsilon)n}\xi(t) > (2\rho)^{1/3}n^{2/3}) \leq \e^q.$$
Specializing this for $q=(2\rho)^{1/3}n^{-1/3}$, we deduce that for $n$
large enough
$$\P(\sup_{t\leq (1-\varepsilon)n}\xi(t) > (2\rho)^{1/3}n^{2/3})\leq \exp\left( -(1+(\varepsilon/2))(\rho^2/2)^{1/3}
  n^{1/3}\right).$$
Since $\varepsilon>0$ can be taken arbitrarily small, this completes the proof.
\end{proof}

\section{Connection with a population model with neutral mutations}
\label{sec:PopModel}
The Yule-Simon distribution originates from \cite{Sim}, where Simon
introduced a simple random algorithm to exemplify the appearance of
\eqref{E:YSd} in various statistical models. More specifically, he proposed
a probabilistic model for describing observed linguistic (but also economic
and biological) data leading to \eqref{E:YSd}. The mentioned paper
initiated a lively dispute between Simon and Mandelbrot (known as the
Simon-Mandelbrot debate) on the validity and practical relevance of Simon's
model. We mention only Mandelbrot's reply \cite{Ma} and
Simon's response \cite{Sim2}, but the discussions includes further (final)
notes and post scripta. Most interestingly, the discussion between the two
gentlemen evolved in particular around the adequacy and meaning of Simon's
model when $\rho<1$ in contrast to $\rho>1$; see pp. 95--96
in \cite{Ma}.

It is one of the purposes of this section to specify a probabilistic population
model for which the Yule-Simon law in both the cases $\rho<1$ and $\rho>1$
can be observed. More generally, we will argue that a natural
generalization of Simon's algorithm yields the two-parameter version
of \eqref{E:YSd} given in Definition \ref{D1}.  To that aim, it is
convenient to first recast Simon's model in terms of random recursive
forests, and then interpret the latter as a population model with neutral
mutations. A more general population model will then yield the full
two-parameter range of the Yule-Simon law.

\subsection{Simon's model in terms of Yule processes with mutations}
 
Fix $p\in(0,1)$, take $n \gg 1$ and view $[n]\coloneqq\{1,\ldots,n\}$ as
a set of vertices. We equip every vertex $2\leq\ell\leq n$ with a pair of
variables $(\varepsilon(\ell),u(\ell))$, independently of the other
vertices. Specifically, each $\varepsilon(\ell)$ is a Bernoulli variable
with parameter $p$, i.e.
$\P(\varepsilon(\ell)=1)=1-\P(\varepsilon(\ell)=0)=p$, and $u(\ell)$ is
independent of $\varepsilon(\ell)$ and has the uniform distribution on
$[\ell-1]$. Simon's algorithm amounts to creating an edge between $\ell $
and $u(\ell)$ if and only if $\varepsilon(\ell)=1$. The resulting random
graph is a random forest and yields a partition of $[n]$ into random
sub-trees. In this setting, Simon showed that for every $k\geq 1$, the
proportion of trees of size $k$, i.e. the ratio of the number of sub-trees
of size $k$ and the total number of sub-trees in the random forest,
converges on average as $n\to\infty$ to $\rho B(k,\rho+1)$, where
$\rho=1/p$.

Let us next enlighten the connection with a standard Yule process
$Y=Y_0$. We start by enumerating the individuals of the population model
described by the Yule process in the increasing order of their birth dates
(so the ancestor is the vertex $1$, its first child the vertex $2$, ...),
and stop the process at time 
$$T(n)\coloneqq \inf\{t\geq 0: Y(t)=n\}$$ when
the population has reached size $n$. Clearly, the parent $u(\ell)$ of an
individual $2\leq \ell \leq n$ has the uniform distribution on $[\ell-1]$,
independently of the other individuals. The genealogical tree obtained by
creating edges between parents and their children is known as a random
recursive tree of size $n$; see e.g. \cite{Drmota}. Next imagine that
neutral mutations are superposed to the genealogical structure, so that
each child is either a clone of its parent or a mutant with a new genetic
type, and more precisely, the individual $\ell $ is a mutant if and only if
$\varepsilon(\ell)=0$, where $(\varepsilon(\ell))_{\ell \geq 2}$ is a
sequence of i.i.d. Bernoulli variables with parameter $p$, independent of
the sequence $(u(\ell))_{\ell \geq 2}$. The partition of the population
into sub-populations of the same genetic type, often referred to as the
allelic partition, corresponds to an independent Bernoulli bond percolation
with parameter $p$ on the genealogical tree, that is, it amounts to
deleting each edge with probability $1-p$, independently of the other
edges. The resulting forest has the same distribution as that obtained
from Simon's algorithm.

Simon's result can then be re-interpreted by stating that the distribution
of the size of a typical sub-tree after percolation (i.e. the number of
individuals having the same genetic type as a mutant picked uniformly at
random amongst all mutants) converges as $n \to\infty$ to the Yule-Simon
distribution with parameter $\rho=1/p$. This can be established as
follows. Observe first that a typical mutant is born at time
$T(\lfloor Un\rfloor)$, where $U$ is an independent uniform variable on
$[0,1]$.  By the branching property, a typical sub-tree can thus be viewed
as the genealogical tree of a Yule process with birth rate $p$ per
individual (recall that $p$ is the probability for a child to be a clone of
its parent), stopped at time $T(n) -T(\lfloor Un\rfloor)$. Then recall that
$$\lim_{t\to \infty} \e^{-t}Y(t)=W \qquad\text{a.s.},$$ 
where $W>0$ is some random variable, and
hence
$$T(n)-T(\lfloor Un\rfloor)\sim \ln (n/W) - \ln(Un/W) = -\ln U\qquad
\text{as }n\to \infty. $$ Since a Yule process with birth rate $p$ per
individual and taken at time $t\geq 0$ has the geometric distribution with
parameter $\e^{-p t}$, and $-p\ln U$ has the exponential distribution with
parameter $\rho=1/p$, we conclude that the distribution of the size of a
typical sub-tree after percolation converges as $n \to\infty$ to
\eqref{E:YSd}.

In the following section, we shall generalize Simon's algorithm in
two different directions, leading ultimately to the two-parameter
Yule-Simon law specified in Definition \ref{D1}.

\subsection{A generalization of Simon's model}
The random algorithm described above only yields Yule-Simon distributions
with parameter $\rho>1$. A modification dealing with the case $\rho\leq 1$
has already been suggested in Simon's article, see Case II on page 431 in
\cite{Sim}; let us now elaborate on this more specifically.

\subsubsection{The full range of the one-parameter Yule-Simon law}
Rather than assuming that the $\varepsilon(\ell)$ are i.i.d. Bernoulli
variables, let us henceforth merely suppose that they form a sequence of
random variables in $\{0,1\}$, independent of the $u(\ell)$'s. As
previously, the individuals $\ell $ such that $\varepsilon(\ell)=0$ are
viewed as mutants, and those with $\varepsilon(\ell)=1$ as clones.

We write $S(n)=\sum_{j=2}^n \varepsilon(j)$ for the number of individuals
that are clones of their respective parents when the total population has
size $n$, and shall consider three mutually exclusive asymptotic regimes,
where the various limits take place in probability:
\begin{framed}
\begin{enumerate}
\item[(a)]  $\lim_{n\to \infty} S(n)/n=1/\rho$ for some $\rho>1$, 
\item[(b)] $\lim_{n\to \infty} S(n)/n=1$, and for any $r>0$,
  $\lim_{n\to \infty}(rn-S(\lfloor rn\rfloor))/(n-S(n))=r$,
\item[(c)] for any $r>0$, $\lim_{n\to \infty}(rn-S(\lfloor
rn\rfloor))/(n-S(n))=r^{\rho}$ for some $\rho \in(0,1)$.
\end{enumerate}
\end{framed}
Plainly, case (a) holds in particular when the $\varepsilon(\ell)$'s form
an i.i.d. sequence of Bernoulli variables with parameter $p=1/\rho$ as in the preceding section.
Regimes (b) and (c) are situations where mutations are asymptotically rare,
and are likely better understood in terms of the number of mutants
$\bar S(n)=n-S(n)$.  Namely (b) is equivalent to requesting that, in
probability, $\bar S(n)$ is regularly varying with index $1$ and
$\bar S(n)=o(n)$, whereas (c) requests that $\bar S(n)$ is regularly
varying with index $\rho$.

Just as before, we declare an individual $\ell$ to be a mutant if and only
if $\varepsilon(\ell)=0$, and we consider the allelic partition at time
$T(n)$, i.e., the partition of the population into sub-population bearing
the same genetic type. As we shall see in the following Proposition
\ref{P05}, this population model leads under the three different regimes to
the full range of the one-parameter Yule-Simon law when studying the limit
size of a typical sub-population.

\subsubsection{A two-parameter generalization}
It remains to appropriately extend the model in order to encompass the
two-parameter Yule-Simon distributions. To that aim, we propose a
further generalization: We replace the underlying standard Yule process
$Y=Y_0$ by a general branching process $Y_{\theta}$ as considered in the
introduction. Again, we consider independently a sequence
$(\varepsilon(\ell))_{n\geq 2}$ of $\{0,1\}$-valued random variables
indicating which individuals are clones or mutants, respectively, and
exactly as before, we may study the allelic partition at the time
\begin{equation}
\label{E:defTnTheta}
  T_{\theta}(n)=\inf\{t\geq 0: Y_{\theta}(t)=n\}\end{equation}
when the total population size $n$ is reached. We stress
that the case $\theta=0$ corresponds to the one-parameter model described
just above: We have $Y_0=Y$ and consequently $T_0(n)=T(n)$.
 
We are now in position to formulate our limit result for the proportion
of sub-populations of size $k$, generalizing Simon's result to the
two-parameter Yule-Simon distributions.
For the sake of simplicity, we focus on the case
$\theta \leq 0$ when the total population in the general branching process
$Y_{\theta}$ is infinite a.s., and leave the more delicate situation
$\theta >0$ (that requires conditioning) to interested readers. 

\begin{proposition} \label{P05} Let $\theta \leq 0$ and $\rho >0$, consider
  a general branching process $Y_{\theta}$ as in Section \ref{sec:Ytheta},
  and define $T_\theta(n)$ as in \eqref{E:defTnTheta}. Let further
  $(\varepsilon(\ell))_{n\geq 2}$ be a sequence of variables in $\{0,1\}$
  which is independent of the branching process and fulfills one of the
  regimes \textup{(a)}, \textup{(b)} or \textup{(c)}. Regard every
  individual $\ell$ with $\varepsilon(\ell)=0$ as a mutant, and consider at
  time $T_{\theta}(n)$ the (allelic) partition of the whole population into
  sub-populations of individuals with the same genetic type.

  For every $k\in\N$, write $Q_n(k)$ for the
  proportion of sub-populations of size $k$
  (i.e. the number of such sub-populations divided by the total number of
  mutants) in the allelic partition at time $T_{\theta}(n)$. 
  Then 
  $$\lim_{n\to \infty} Q_n(k) = \P(X_{\vartheta, \varrho}=k)\qquad \text{in probability},$$
 where 
$$ (\vartheta, \varrho)=   \left\{
\begin{matrix}
(\theta \rho,(1-\theta)\rho) &\text{ in regime \textup{(a)},}\\
(\theta,1-\theta) &\text{ in regime \textup{(b)},}\\
(\theta ,(1-\theta)\rho) &\text{ in regime \textup{(c)}.}\\
\end{matrix}
\right.
$$
\end{proposition}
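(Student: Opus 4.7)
The plan is to generalize the Yule-process argument sketched at the end of the previous subsection (replacing $Y$ by $Y_\theta$) to the three new regimes, and then to pass from a single typical sub-population to the empirical proportion $Q_n(k)$ via a second-moment argument. By the branching property, the sub-population rooted at a mutant $\ell$ (i.e., with $\varepsilon(\ell)=0$) evolves, from time $T_\theta(\ell)$ onwards, as a branching process in which each potential descendant is retained if and only if its own $\varepsilon$-label equals $1$.

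Pick a mutant uniformly and call its index $L_n$. The three hypotheses yield $L_n/n \to U$ in probability in regimes (a) and (b), and $L_n/n \to U^{1/\rho}$ in regime (c), where $U$ is uniform on $[0,1]$. Combining this with Proposition \ref{P02}(ii), which gives $T_\theta(n) \sim (\ln n - \ln W_\theta)/(1-\theta)$, the age $A_n \coloneqq T_\theta(n) - T_\theta(L_n)$ converges in probability to an exponential random time of parameter $1-\theta$ in regimes (a) and (b), and of parameter $(1-\theta)\rho$ in regime (c).

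It remains to identify the limiting law of the sub-tree at this age. In regimes (b) and (c), $\bar S(n)/n \to 0$, so mutations are asymptotically sparse: the sub-tree has size bounded in probability, and the expected number of further mutations among its members vanishes. Hence the sub-tree is asymptotically a full $Y_\theta$, giving $X_{\theta,1-\theta}$ in (b) and $X_{\theta,(1-\theta)\rho}$ in (c). In regime (a), the marks $\varepsilon$ along the descendants can be approximated (to leading order) by i.i.d.\ Bernoulli$(1/\rho)$, yielding a thinned branching process with reproduction intensity $\rho^{-1}\e^{-\theta t}\d t$; the time change $s = t/\rho$ identifies it in law with $Y_{\theta\rho}(\cdot/\rho)$, so the sub-population size converges to $Y_{\theta\rho}(T/\rho)$ with $T$ exponential of parameter $1-\theta$, that is, to $X_{\theta\rho,(1-\theta)\rho}$ since $T/\rho$ is exponential of parameter $(1-\theta)\rho$.

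To upgrade this one-mutant convergence in distribution to convergence in probability of $Q_n(k) = \bar S(n)^{-1}\sum_{\ell:\,\varepsilon(\ell)=0}\mathbf{1}\{C_\ell(n)=k\}$ (where $C_\ell(n)$ is the size of the sub-population rooted at $\ell$), I would compute the first two moments: the first converges to $\P(X_{\vartheta,\varrho}=k)$ by the previous steps, while the second converges to the same quantity squared, using that two uniformly picked mutants have disjoint genealogies with high probability and their sub-populations are then conditionally independent. The main obstacle will be making the ``local i.i.d.\ approximation'' of regime (a) rigorous from only the averaged hypothesis $S(n)/n \to 1/\rho$: one has to argue that the restriction of $(\varepsilon(\ell))$ to the random set of descendants' birth indices still exhibits density $1/\rho$, which requires a careful coupling or conditioning on the genealogy, together with uniform control of $A_n$ across all mutants.
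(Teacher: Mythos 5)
Your proposal matches the paper's own (sketched) argument essentially step for step: identifying the birth index $L_n$ of a typical mutant and hence the age $T_\theta(n)-T_\theta(L_n)$ via Proposition \ref{P02}, treating regimes (b)–(c) by noting mutations are asymptotically sparse so the sub-tree is a full $Y_\theta$, treating regime (a) by thinning at rate $1/\rho$ and the time change $s=t/\rho$ to get $Y_{\theta\rho}$ at an $\mathrm{Exp}((1-\theta)\rho)$ time, and then upgrading to convergence of $Q_n(k)$ by a second-moment method using asymptotic independence of two uniformly chosen mutants. The paper likewise leaves the ``density-$1/\rho$ along the genealogy'' approximation in regime (a) at a heuristic level, so your flag of this as the main technical gap is fair rather than a defect relative to the paper's proof.
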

\begin{remark} We stress that our model leads to the complete range of
  parameters $(\vartheta,\varrho)$ of the Yule-Simon distribution
  satisfying $\vartheta\leq 0$ and $\varrho>0$. Indeed, if
  $\vartheta+\varrho>1$, then the size of a typical sub-tree converges in
  law with the choices $\theta:=\vartheta/(\vartheta+\varrho)$ and
  $\rho:=\vartheta+\varrho$ under regime (a) to $X_{\vartheta,\varrho}$. If
  $\vartheta+\varrho=1$, the same choices of $\theta$ and $\rho$ lead to
  $X_{\vartheta,\varrho}$ under regime (b). If $\vartheta+\varrho<1$, then
  $\theta:=\vartheta$ and $\rho:=\varrho/(1-\vartheta)$ under regime (c)
  yield the law $X_{\vartheta,\varrho}$.
\end{remark}
  
\begin{remark}\label{R2} The conditional expectation of the size of a
  typical sub-tree given that there are $m(n)$ mutants in the population of
  total size $n$ is clearly $n/m(n)$.  Note that $m(n)\sim (1-1/\rho)n$ in
  regime (a), whereas $m(n)=o(n)$ in regimes (b) and (c).  We may thus
  expect from Proposition \ref{P05} that
$$ \E(X_{\theta \rho,(1-\theta)\rho})=\rho/(\rho-1) \qquad \text{when
  $\theta\leq 0$ and $\rho>1$},
$$
and  that 
$$ \E(X_{\theta,(1-\theta)\rho})=\infty \qquad \text{when $\theta\leq 0$ and $\rho\leq 1$}.
$$  
That these identities indeed hold has already been observed in Remark
\ref{R00}.
\end{remark} 

We shall now conclude this work by presenting the main steps of the proof
of Proposition \ref{P05}, and leaving some of the technical details to the
interested readers.
\begin{proof}[Sketch of proof of Proposition \ref{P05}]
  From Proposition \ref{P02} we deduce that the time $T_{\theta}(n)$ at
  which the population reaches size $n$ satisfies
  \[T_{\theta}(n)= (1-\theta)^{-1} \ln(n/W_{\theta})+o(1)\] in probability,
  where $W_\theta>0$ denotes the limit in probability of
  $\e^{(\theta-1)t}Y_{\theta}(t)$ as $t\rightarrow\infty$.  We first prove
  convergence in distribution for the size of a typical sub-population.

  We start with regime (a). Here, as in the case where the
  $\varepsilon(\ell)$ are i.i.d. Bernoulli variables, the probability that
  a mutant picked uniformly at random belongs to the $rn$ first individuals
  is approximately $r$ for any $r\in(0,1)$. In other words, a typical
  mutant is born at time approximately $T_{\theta}(\lfloor Un \rfloor)$,
  for $U$ an independent uniform variable on $[0,1]$. Furthermore, the size
  of the sub-tree generated by that mutant remains close to the
  genealogical tree of a general branching process where the reproduction
  point measure is Poisson with intensity $\rho^{-1} \e^{-\theta t} \d t$,
  stopped at time
  $T_{\theta}(n)-T_{\theta}(\lfloor Un \rfloor) \sim -(1-\theta)^{-1} \ln
  U$. The last expression is exponentially distributed with parameter
  $1-\theta$. It follows after the time substitution $s=\rho^{-1}t$ that
  the size of a typical sub-tree (i.e., of a typical sub-population) is
  close, in distribution, to $X_{\theta \rho,(1-\theta)\rho}$.

  As far as regime (b) is concerned, we note that the second requirement in
  regime (b) ensures that the probability that a mutant picked uniformly at
  random belongs to the $rn$ first individuals is still approximately $r$,
  for any $r\in(0,1)$.  On the other hand, the first requirement is that
  $\varepsilon(n)$ tends to $1$ in C\'esaro mean, so mutations are rare
  when $n\to \infty$. This entails that with high probability, the sub-tree
  generated by a typical mutant can be viewed as the genealogical tree of a
  general branching process with a Poisson reproduction measure of
  intensity $\e^{-\theta t} \d t$, evaluated at time
  $-(1-\theta)^{-1} \ln U$, as under (a). Therefore, its size is close, in
  distribution, to $X_{\theta,1-\theta}$.

  Finally, let us consider regime (c). Now for every $r\in(0,1)$, the
  probability that a mutant picked uniformly at random belongs to the $rn$
  first individuals is approximately $r^{\rho}$. Hence, the age of a
  typical mutant at time $T_\theta(n)$ is close in distribution to
  $-(1-\theta)^{-1}(1/\rho)\ln U$ where $U$ has again the uniform
  distribution, i.e., is close to an exponential variable with parameter
  $(1-\theta)\rho$. On the other hand, $\varepsilon(n)$ still tends to $1$
  in C\'esaro mean, so mutations are rare as $n\to \infty$, and the
  sub-tree generated by a typical mutant can again be viewed as the
  genealogical tree of a general branching process with a Poisson
  reproduction measure of intensity $\e^{-\theta t} \d t$. We conclude that
  the size of the sub-tree a typical mutant generates is close to
  $X_{\theta,(1-\theta)\rho}$.  This treats convergence in distribution for
  the size of a typical sub-population. 

  In order to pass on to the proportion $Q_n(k)$ of sub-populations
  of size $k$ at time $T_\theta(n)$, we first note that the above arguments
  readily extend to a pair of typical sub-population sizes: Indeed, if we
  choose a second mutant uniformly at random and independently of our first
  choice, then the sizes of the two respective sub-populations at time
  $T_{\theta}(n)$ become asymptotically independent as
  $n\rightarrow\infty$, in all regimes (a), (b) and (c). This implies joint
  convergence in distribution of a pair of sub-population sizes in any of
  the three regimes to a pair
  $(X_{\vartheta, \varrho},\,X'_{\vartheta, \varrho})$, where
  $X'_{\vartheta, \varrho}$ is an independent copy of
  $X_{\vartheta, \varrho}$.

  Let us now write $N_\ell(n)$ for the size of the sub-population emanating
  from individual $\ell$ at time $T_\theta(n)$ (with the convention that
  $N_\ell(n)=0$ if $\ell$ is not a mutant), and $m(n)$ for the number of
  mutants in the population at this time. For any $k\in\N$, the above
  considerations imply in particular convergence of the first two moments
  \begin{align*}
\lim_{n\rightarrow\infty}\E\left[\frac{\sum_{\ell=1}^n\1_{\{N_{\ell}(n)=k\}}}{m(n)}\right]&=\P(X_{\vartheta,
                                                                                                            \varrho}=k),\\
\lim_{n\rightarrow\infty}\E\left[\frac{\left(\sum_{\ell=1}^n\1_{\{N_{\ell}(n)=k\}}\right)^2}{m(n)^2}\right]&=\P(X_{\vartheta,
                                                                                                            \varrho}=k)^2.
  \end{align*}
  Via the second moment method, this, in turn, implies the stated
  convergence in probability for the proportion $Q_n(k)$ of sub-populations
  of size $k$.
\end{proof} 

\bibliographystyle{plain}
\bibliography{2parameterYS}
\end{document}